\numberwithin{figure}{subsection}
\newtheorem{theorem}{Theorem}[subsection]
\newtheorem{proposition}[theorem]{Proposition}
\newtheorem{corollary}[theorem]{Corollary}
\newtheorem{lemma}[theorem]{Lemma}
\newtheorem{definition}[theorem]{Definition}
\newtheorem{example}[theorem]{Example}
\title{\textbf{Rational Variant of Hartogs' Theorem\\ on Separate Holomorphicity}}
\author{Hanwen Liu}
\begin{document}
\maketitle
\begin{abstract}
    Given an uncountable algebraically closed field $K$, we proved that if partially defined function $f\colon K \times \dots \times K \dashrightarrow K$ defined on a Zariski open subset of the $n$-fold Cartesian product $K \times \dots \times K$ is rational in each coordinate whilst other coordinates are held constant, then $f$ is itself a rational function in $n$-variables.
\end{abstract}

\section{Introduction and Backgrounds}
We say {\em partial function} to mean partially defined function. For any partial function $f\colon A \dashrightarrow B$, we shall denote by $\operatorname{dom}f$ its {\em domain of definition}. For simplicity we introduce the following notation.
\begin{definition}\label{definition1_1}
    Let $A_1, \dots, A_n, B$ be arbitrary sets and $f\colon A_1 \times \dots \times A_n \dashrightarrow B$ a partial function. For each fixed $\left(a_1, \dots, a_n\right) \in A_1 \times \dots \times A_n$ and $i \in\{1, \dots, n\}$, define $f_{a_1, \dots, a_{i-1}}^{a_{i+1}, \dots, a_n}\colon A_i \dashrightarrow B$ to be the partial function satisfying the following conditions:
    \begin{enumerate}[itemsep=5pt]
        \item[$\mathrm{I}$)] $\operatorname{dom} f_{a_1, \dots, a_{i-1}}^{a_{i+1}, \dots, a_n}=\left\{a \in A_i \mid\left(a_1, \dots, a_{i-1}, a, a_{i+1}, \dots, a_n\right) \in \operatorname{dom}f\right\}$,
        \item[$\mathrm{II}$)] $f_{a_1, \dots, a_{i-1}}^{a_{i+1}, \dots, a_n}(a)=f\left(a_1, \dots, a_{i-1}, a, a_{i+1}, \dots, a_n\right)$ for all $\left(a_1, \dots, a_{i-1}, a, a_{i+1}, \dots, a_n\right) \in \operatorname{dom}f$.
    \end{enumerate}
\end{definition}
\begin{example}
    Consider the complex polynomial function $f\colon(z, w) \mapsto z^3+z w^3$ defined on $\mathbb{C} \times \mathbb{C}$. For any $a \in \mathbb{C}$, according to Definition \ref{definition1_1} we have that $f^a$ is the entire function $z \mapsto z^3+a^3 z$, and $f_a$ is the entire function $w \mapsto a w^3+a^3$.
\end{example}

\quad 

Throughout this article, varieties and regular mappings are in the sense of FAC. We shall recall the definition of a {\em rational map}.
\begin{definition}\label{definition1_2}
    Let $X$ and $Y$ be varieties over an algebraically closed field $k$, a partial function $f\colon X \dashrightarrow Y$ is termed a rational map, if $\operatorname{dom}f$ is a Zariski dense open subset of $X$ and $\left.f\right|_{\operatorname{dom}f}$ is a morphism of varieties over $k$.
\end{definition}
\begin{definition}\label{definition1_3}
    Let $X$ be a variety over an algebraically closed field $k$, a rational map $f\colon X \dashrightarrow k$ is also said to be a rational function on $X$.
\end{definition}

\quad

The main purpose of this note is to prove the following statement.
\begin{theorem}\label{theorem1_4}
    Let $k$ be an algebraically closed field of at least continuum cardinality and $f\colon k^n \dashrightarrow k$ be a partial function defined on a nonempty Zariski open subset of $k^n$, if $f_{a_1, \dots, a_{i-1}}^{a_{i+1}, \dots, a_n}\colon k \dashrightarrow k$ is a rational function for each $\left(a_1, \dots, a_n\right) \in \operatorname{dom}f$ and $i \in\{1, \dots, n\}$, then $f$ is a rational function.
\end{theorem}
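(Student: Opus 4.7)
The plan is to induct on $n$. The base case $n = 1$ is tautological. For $n \geq 2$, I first apply the inductive hypothesis to each first-coordinate slice: for each $a_1 \in k$ with nonempty slice, the function $g_{a_1}(x_2, \ldots, x_n) := f(a_1, x_2, \ldots, x_n)$ inherits separate rationality in its $n - 1$ scalar variables (its one-variable slices are exactly one-variable slices of $f$), so by induction $g_{a_1}$ is a rational function on $k^{n-1}$. Combined with the given rationality of $f$ in $x_1$, this reduces the theorem to a ``two-block'' statement: if $f\colon k \times k^{n-1} \dashrightarrow k$ is rational in $x_1$ for each fixed $b \in k^{n-1}$ and rational in $b$ for each fixed $x_1 \in k$, then $f$ is rational on $k^n$.

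For this two-block statement I would use interpolation against a uniform degree bound. For $b$ in the projection of $\operatorname{dom} f$ to $k^{n-1}$ (a nonempty open set, hence uncountable), write $f(\cdot, b) = N_b(x_1)/M_b(x_1)$ in coprime form and set $d(b) := \max(\deg N_b, \deg M_b)$. Since the sets $\{b : d(b) = D\}$ for $D \geq 0$ partition an uncountable set into countably many pieces, pigeonhole supplies an integer $D_0$ with $B_0 := \{b : d(b) = D_0\}$ uncountable, and therefore Zariski-dense in $k^{n-1}$. I then select $2 D_0 + 1$ distinct scalars $\xi_1, \ldots, \xi_{2D_0+1} \in k$ satisfying the genericity condition that each $\{b : (\xi_i, b) \in \operatorname{dom} f\}$ is a dense open subset of $k^{n-1}$.

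Next, I impose the $2D_0 + 1$ linear conditions $A(\xi_i) = f(\xi_i, b) B(\xi_i)$ on the $2 D_0 + 2$ unknown coefficients of polynomials $A, B$ of degree at most $D_0$ in $x_1$. The resulting $(2D_0 + 1) \times (2D_0 + 2)$ coefficient matrix $M(b)$ has entries that are either constants or the maps $b \mapsto f(\xi_i, b)$, each rational in $b$ by the second hypothesis of the two-block setup, so $M(b)$ has entries rational in $b$. A degree count --- the polynomial $A M_b - N_b B$ has degree at most $2 D_0$ in $x_1$ and vanishes at $2 D_0 + 1$ points, hence identically, after which coprimality of $(N_b, M_b)$ forces $(A, B)$ to be a $k$-scalar multiple of $(N_b, M_b)$ --- shows that $\ker M(b)$ is exactly one-dimensional for $b \in B_0$. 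Cramer's rule then produces a vector $v(b)$ of signed maximal minors spanning $\ker M(b)$, rational in $b$ and nonzero on $B_0$; its coordinates define polynomials $A(x_1, b), B(x_1, b)$ (polynomial in $x_1$ with rational-in-$b$ coefficients) satisfying $A(x_1, b) = f(x_1, b) B(x_1, b)$ for all $b \in B_0$ and generic $x_1$.

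To finish, fix a generic $x_1 = \eta$ and regard both sides of $A(\eta, b) = f(\eta, b) B(\eta, b)$ as rational functions of $b \in k^{n-1}$ (using once more the rationality-in-$b$ hypothesis); they agree on the infinite set $B_0$ and hence agree identically, giving $f = A/B$ on a Zariski-dense open subset of $k^n$ and therefore the rationality of $f$. The principal obstacle I foresee is the linear-algebra core: guaranteeing that $\ker M(b)$ is \emph{exactly} one-dimensional on an uncountable locus --- which is why $D_0$ must be chosen as a degree actually attained rather than merely as a generic upper bound --- together with arranging the interpolation nodes $\xi_i$ so that all of the genericity hypotheses invoked throughout hold simultaneously.
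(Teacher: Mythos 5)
Your strategy is essentially the one the paper uses in Section 4 for fields of cardinality \emph{exceeding} continuum: pigeonhole on the degree data of the one-variable slices to find a Zariski dense parameter set on which the degree is constant, reconstruct the slice by linear algebra from its values at finitely many interpolation nodes (your kernel-of-$M(b)$ argument is the paper's Lemma \ref{lemma4_4}/Proposition \ref{proposition4_5} in Cramer's-rule form), and then glue. There is one genuinely interesting divergence: the paper chooses its nodes inside $\bigcap_{a\in\Lambda}\operatorname{dom}f_a$, an intersection of continuum many cofinite sets, which is why it must assume $|k|>\mathfrak{c}$ and then spend all of Section 3 on a separate non-Archimedean argument (Stawski's theorem, Hankel determinants, Baire category) for the continuum case. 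You instead fix the nodes $\xi_i$ first and then shrink $B_0$ to the $b$ with $(\xi_i,b)\in\operatorname{dom}f$ for all $i$; since this is an intersection with finitely many dense open sets it preserves Zariski density, and the cardinality obstruction never arises. If carried out carefully this quantifier reordering appears to handle all cardinalities $\geqslant\mathfrak{c}$ uniformly and would make the paper's entire Section 3 unnecessary, so it is worth writing out in full detail rather than in outline.

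That said, there are concrete gaps. First, the inference ``$B_0$ is uncountable, and therefore Zariski-dense in $k^{n-1}$'' is false once $n-1\geqslant 2$: an uncountable subset of a hyperplane is not dense. What you actually need is that a variety over an uncountable field is not a countable union of proper Zariski-closed subsets, so that at least one $\{b: d(b)=D\}$ fails to be nowhere dense and hence (by irreducibility) is dense; this is the paper's Propositions \ref{proposition4_2} and \ref{proposition4_3} and requires its own proof (hyperplane slicing and induction on dimension). The same correction is needed at the end, where ``they agree on the infinite set $B_0$ and hence agree identically'' is insufficient for rational functions of $n-1\geqslant 2$ variables; agreement must be on a Zariski dense set, which your (corrected) $B_0$ does provide. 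Second, your conclusion only shows that $f$ coincides with the rational function $A/B$ on $\operatorname{dom}f\cap\operatorname{dom}(A/B)$; under the paper's Definition \ref{definition1_2} one must still verify that $f$ is regular at points of $\operatorname{dom}f$ where $A/B$ is undefined, which is exactly what the paper's removable-singularity machinery (Lemma \ref{lemma3_1}, Corollary \ref{corollary3_2}, Proposition \ref{proposition3_3}) is for, using the separate-rationality hypothesis along lines through such points. Neither gap is fatal, but both require real arguments that your sketch currently asserts rather than proves.
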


\newpage
\section{Failure of the theorem in the case of countable cardinality}
In this section, by constructing an explicit counter-example, we show that the assertion in Theorem \ref{theorem1_4} does not hold for any ccountable algebraically closed field.
\begin{proposition}\label{proposition2_1}
    Let $k$ be a countable algebraically closed field, then there exists a function $f\colon k \times k \rightarrow k$ such that $f_a$ and $f^a$ are polynomial functions for all $a \in k$, but $f$ is not a rational function.
\end{proposition}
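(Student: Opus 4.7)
The plan is to exploit countability by enumerating $k = \{a_1, a_2, \dots\}$ with all $a_i$ distinct and setting
\[
f(x,y) \;=\; \sum_{n=1}^{\infty} \prod_{i=1}^{n}(x-a_i)(y-a_i).
\]
For any $(x_0,y_0) \in k\times k$, writing $x_0 = a_m$ shows that the factor $(x_0-a_m)$ kills every summand with $n \geq m$, so the series collapses to a finite sum and $f$ is well-defined everywhere on $k\times k$. Fixing $x_0 = a_m$, the partial function $f_{a_m}(y) = \sum_{n=1}^{m-1}\prod_{i=1}^n(a_m-a_i)\prod_{i=1}^{n}(y-a_i)$ is a polynomial of degree exactly $m-1$ in $y$, the leading coefficient $\prod_{i=1}^{m-1}(a_m-a_i)$ being nonzero by distinctness; the analogous statement for $f^{a_m}$ follows by symmetry. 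Thus all coordinate slices are polynomial, but $\deg_y f_{a_m}\to\infty$ already prevents $f$ from being a polynomial, since any $F\in k[x,y]$ of total degree $D$ has $\deg_y F(a,\cdot)\le D$ for every $a$.

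To upgrade \textbf{not polynomial} to \textbf{not rational}, I would establish the auxiliary statement that a rational $f\colon k^2\dashrightarrow k$ whose slices $f^a$ and $f_a$ are all polynomial must itself be polynomial. Suppose $f$ agrees on a dense Zariski open subset with $P/Q$, $\gcd(P,Q)=1$ in $k[x,y]$. The set of $a\in k$ with $Q(x,a)\equiv 0$ is finite (each such $a$ contributes a factor $y-a$ of $Q$), and the set of $a$ for which $P(x,a)$ and $Q(x,a)$ share a common factor is contained in the finite zero locus of $\operatorname{Res}_x(P,Q)\in k[y]$. For every $a$ outside both sets, the identity $f^a(x)=P(x,a)/Q(x,a)$ holds as rational functions in $x$, and polynomiality of $f^a$ together with coprimeness of $P(x,a),Q(x,a)$ forces $Q(x,a)\in k^{\times}$. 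Expanding $Q(x,y)=\sum_j q_j(x)y^j$ and inspecting the coefficient of each $x^k$ with $k\ge 1$ produces a polynomial in $y$ vanishing at cofinitely many $a$; infinitude of $k$ promotes this to identical vanishing, so every $q_j$ is a constant and $Q$ depends only on $y$. Running the symmetric argument forces $Q$ to depend only on $x$, hence $Q\in k^{\times}$ and $f$ is polynomial, contradicting the preceding paragraph.

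The principal obstacle is this second reduction: converting slicewise polynomiality into global polynomiality in the presence of a rational candidate requires resultants (to isolate genuine common factors across slices) together with the infinitude of $k$ (to promote cofinite vanishing of a univariate polynomial to identical vanishing). Once it is available the counterexample works essentially by design, since the telescoping products $\prod_{i=1}^{n}(x-a_i)(y-a_i)$ are engineered to be polynomial on every axis-aligned line of $k\times k$ while forcing the slice degrees to grow without bound.
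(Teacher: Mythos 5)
Your construction is, up to indexing, exactly the one in the paper: both enumerate the countable field and use the telescoping products $\prod_{i}(x-a_i)(y-a_i)$ so that every axis-parallel slice collapses to a finite sum (hence a polynomial) while the slice degrees grow without bound. Where you diverge is the final step, ruling out rationality. The paper observes that since $\operatorname{dom}f=k^2$, a rational $f$ would be a regular function on all of $\mathbb{A}^2$ and hence a polynomial by $\mathcal{O}(\mathbb{A}^2)=k[x,y]$ (quoted there via the Nullstellensatz), after which the unbounded slice degrees give an immediate contradiction. You instead prove the stronger auxiliary statement that a rational function on $k^2$ all of whose slices are polynomial must itself be polynomial, via resultants and the infinitude of $k$. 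Your route is sound (modulo the usual small cases: $\deg_x Q=0$, slices where $P(x,a)\equiv 0$, and the specialization caveats for $\operatorname{Res}_x(P,Q)$ when leading coefficients drop, all of which only enlarge the finite exceptional set), and it buys robustness against a weaker reading of ``rational function'' in which $f$ is only required to agree with $P/Q$ on some dense open set; under the paper's Definition \ref{definition1_2} the domain of the partial function is all of $k^2$, so the paper's shorter argument already suffices. One last point worth making explicit in your write-up: the unbounded-degree contradiction compares slices of the total function $f$ with slices of the polynomial representative $F$, and these agree for all but finitely many $a$ because each vertical line meets the dense open set of agreement in a cofinite subset; this is easy but should be said.
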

\begin{proof}
    Since $k$ is countably infinite, there exists a sequence $\left(a_n\right)_{n=0}^{\infty}$ of elements in $k$, such that the correspondence $n \mapsto a_n$ is a bijection between $k$ and $\mathbb{N}$. Define function $f\colon k \times k \rightarrow k$ by $f\left(a_n, a_m\right):=\sum_{i=0}^{n+m} \prod_{l=0}^i\left(a_n-a_l\right)\left(a_m-a_l\right)$. 
    
    Fix any $n \in \mathbb{N}$, then by construction $f^{a_m}(a)=\sum_{i=0}^m \prod_{n=0}^i\left(a-a_n\right)\left(a_m-a_n\right)$ for all $a \in k$ and in particular $f^{a_m}$ is a polynomial function of degree $m$. By symmetry of variables, $f_a$ is also a polynomial function for each $a \in k$.

    Provided that $f$ is a rational function, then by Hilbert's Nullstellensatz (c.f. Theorem 4.8 in \cite{1}) $f$ is a polynomial function since $\operatorname{dom} f=k^2$. But then $\operatorname{deg} f \geqslant \operatorname{deg} f^{a_n}=n \rightarrow \infty$ as $n \rightarrow \infty$, contradiction.
\end{proof}

\section{Proof of the theorem in the case of continuum cardinality}
In this section we prove that the assertion in Theorem \ref{theorem1_4} holds for algebraically closed fields of continuum cardinality. The absolute value of any complete non-Archimedean field is assumed tacitly to be non-trivial.
\begin{lemma}\label{lemma3_1}
    Let $X$ be a normal variety over an algebraically closed field $k$ and let $f\colon X \dashrightarrow k$ be a rational function, if for every point $p \in X \backslash \operatorname{dom} f$ there exists an algebraic curve $C$ on $X$ passing through $p$, such that $C \bigcap \operatorname{dom} f\neq \varnothing$ and $p$ is a removable singularity of the rational function $\left.f\right|_C\colon C \dashrightarrow k$, then there exists a regular function $h\colon X \rightarrow k$ such that $\left.h\right|_{\operatorname{dom} f} \equiv f|_{\operatorname{dom} f}$.
\end{lemma}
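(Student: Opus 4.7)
My plan is to argue by contradiction using the algebraic Hartogs theorem on the normal variety $X$: the identity $\mathcal{O}(X) = \bigcap_{D} \mathcal{O}_{X,D}$, with $D$ ranging over the prime divisors of $X$, tells us that a rational function extends to a regular function on $X$ if and only if its valuation along every prime divisor is nonnegative. Assume then that $f$ does not extend. Some prime divisor $D$ of $X$ must have $v_{D}(f) = -a$ for an integer $a \geqslant 1$, and $D$ lies inside $X \setminus \operatorname{dom} f$ (because regularity of $f$ at any $p \in D$ would force $f \in \mathcal{O}_{X,p} \subseteq \mathcal{O}_{X,D}$, contradicting the negative valuation).

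Next I will locate a particularly good closed point $p \in D$. Because $X$ is normal, its singular locus has codimension $\geqslant 2$ and hence cannot contain the codimension-one $D$; so the smooth locus of $X$ meets $D$ in a dense open subset, and I may choose an affine open $U$ of $X$ contained in the smooth locus and meeting $D$. Since $\mathcal{O}_{X,q}$ is a UFD for every $q \in U$, the height-one prime ideal defining $D$ is principal at each such $q$, so after shrinking $U$ the divisor $D \cap U$ is cut out by a single regular function $t \in \mathcal{O}(U)$. Since $v_{D}(1/f) = a > 0$, $1/f$ is regular at the generic point of $D$; shrinking $U$ further, we may assume $1/f$ is regular on all of $U$. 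Then $1/f = t^{a} u$ on $U$ for some $u \in \mathcal{O}(U)$ with $v_{D}(u) = 0$, so $u$ does not vanish identically on $D \cap U$. Picking $p$ in the non-empty open set $\{q \in D \cap U : u(q) \neq 0\}$, we obtain: $X$ smooth at $p$, $D$ locally cut out by $t$, and $u \in \mathcal{O}_{X,p}^{\times}$.

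Now let $C \subseteq X$ be any algebraic curve through $p$ with $C \cap \operatorname{dom} f \neq \varnothing$. Because $D \subseteq X \setminus \operatorname{dom} f$, the curve $C$ is not contained in $D$, so $t|_{C}$ is a nonzero regular function on $C$ near $p$ vanishing at $p$ to some positive order $e$ (measured on the normalization of $C$ if $C$ is singular at $p$). Restricting $1/f = t^{a} u$ to $C$ gives $(1/f)|_{C} = (t|_{C})^{a}(u|_{C})$; the factor $u|_{C}$ is a unit at $p$ while $(t|_{C})^{a}$ vanishes to order $ae \geqslant 1$ there. Hence $f|_{C}$ has a pole of order $ae$ at $p$, contradicting the assumption that $p$ is a removable singularity of $f|_{C}$. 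As no admissible curve $C$ through $p$ can have $p$ as a removable singularity, this contradicts the very hypothesis of the lemma at $p \in X \setminus \operatorname{dom} f$. Therefore $f$ extends to a regular function $h \colon X \to k$, and by density of $\operatorname{dom} f$ this extension automatically agrees with $f$ on $\operatorname{dom} f$.

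The main obstacle in this plan is simultaneously arranging the three open-density conditions on $p$ (smoothness of $X$, regularity of $1/f$, nonvanishing of $u$) that underpin the clean local factorization $1/f = t^{a} u$ with $u$ a unit; once such a $p$ is in hand the pole-along-$C$ step is a brief intersection-multiplicity computation. Normality enters twice in an essential way: first through smoothness in codimension one, ensuring that the smooth locus meets $D$ densely so that $D$ becomes locally principal there, and second through algebraic Hartogs to convert the divisorial valuation condition into regularity of $f$ on the whole of $X$.
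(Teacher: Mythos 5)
Your proof is correct, but it takes a genuinely different route from the paper's. The paper works ``forwards'': using normality of $X$ and properness of $\mathbb{P}^1$ it first extends $f$ to a rational map $\varphi\colon X \dashrightarrow \mathbb{P}^1$ whose indeterminacy locus has codimension at least $2$, then uses the curve hypothesis to rule out the value $[1:0]$, so that $\varphi$ is an honest $k$-valued regular function off that locus, and finally invokes the codimension-$2$ Hartogs extension theorem. You instead argue contrapositively through the divisorial form of algebraic Hartogs, $\mathcal{O}(X)=\bigcap_{D}\mathcal{O}_{X,D}$: a non-extendable $f$ must have a pole divisor $D$, necessarily contained in $X \backslash \operatorname{dom} f$, and at a well-chosen smooth point $p$ of $D$ the local factorization $1/f=t^{a}u$ (with $t$ a local equation for $D$ and $u$ a unit at $p$) forces \emph{every} curve through $p$ meeting $\operatorname{dom} f$ to acquire a pole of order $ae\geqslant 1$ at $p$, contradicting the removability hypothesis. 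Your version makes the role of the curve condition completely concrete --- it exists precisely to kill pole divisors --- and it only needs the hypothesis at a single generic point of each putative pole divisor rather than at every point of $X\backslash\operatorname{dom}f$; the price is the bookkeeping needed to arrange simultaneously that $p$ is a smooth point, that $1/f$ is regular near $p$, and that $u(p)\neq 0$, and the mild care about measuring vanishing orders on the normalization when $C$ is singular at $p$. The paper's version avoids all of that local analysis by outsourcing the work to the extension theorem for rational maps into proper targets, at the cost of being terser about why $[1:0]\notin\varphi(C)$ implies $\varphi(p)\neq[1:0]$. Both arguments use normality twice and in essentially parallel ways (regularity in codimension one plus a Hartogs-type statement), so the two proofs are close in spirit even though neither reduces to the other.
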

\begin{proof}
    Identify $k$ with the standard affine chart $\left\{[a: 1] \in k \mathbb{P}^1 \mid a \in k\right\}$ of $\mathbb{P}^1:=k \mathbb{P}^1$. Since $X$ is normal and $\mathbb{P}^1$ is proper, there exists 2 -codimensional subvarieties $Z_1, \dots, Z_n$ of $X$ and a rational map $\varphi\colon X \dashrightarrow \mathbb{P}^1$ defined on $X_0:=X \backslash \bigcup_{i=1}^n Z_i$ such that $\operatorname{dom} f \subseteq X_0$ and $\varphi(p)=[f(p): 1]$ for all $p \in \operatorname{dom} f$.

    By assumption, for any $p \in X_0$, there exists a Zariski open neighbourhood $U$ of $p$ in $X_0$ and an algebraic curve $C$ in $U$ passing through $p$, such that $[1: 0] \notin \varphi(C)$. Therefore $\varphi\left(X_0\right) \subseteq k$ and hence there exists rational function $g\colon X \dashrightarrow k$ such that the following diagram commutes:
    \begin{figure}[H]
        \centering 
        \begin{tikzpicture}[node distance = 1.5cm,scale=0.6]
            \node (X) at (0,0) {$X$};
            \node (P) at (2,0) {$\mathbb{P}^1$};
            \node (k) at (2,2) {$k$};
            \draw [dashed, thick, ->] (X)--(k);
            \draw [dashed, thick, ->] (X)--(P);
            \node at (1,-0.3) {$\varphi$};
            \node at (0.9,1.3) {$g$};
            \node at (2,0.9) {\rotatebox{90}{$\supseteq$}};
        \end{tikzpicture}
    \end{figure}

    Since $X$ is normal and $X \backslash \operatorname{dom}g =\bigcup_{i=1}^n Z_i$ is of codimension at least 2, by Hartogs' extension theorem we obtain that $Z_1, \dots, Z_n$ are removable singularities of $g$. Notice that by construction $\left.g\right|_{\operatorname{dom}f} \equiv f|_{\operatorname{dom} f}$, the statement is proved.
\end{proof}
\begin{corollary}\label{corollary3_2}
    Let $X, Y$ be normal varieties over an algebraically closed field $k$ and $f\colon X \times Y \dashrightarrow k$ be a partial function defined on a nonempty Zariski open subset of $X\times Y$, if $f_a, f^b$ are rational functions for all $(a, b) \in \operatorname{dom}f$ and there exists a rational function $g\colon X \times Y \dashrightarrow k$ such that $f(a, b)=g(a, b)$ for all $(a, b) \in \operatorname{dom} f \bigcap \operatorname{dom} g$, then $f$ is a rational function.
\end{corollary}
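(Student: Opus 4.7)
The strategy is to use Lemma \ref{lemma3_1} to extend $g$ regularly to the open subvariety $W := \operatorname{dom} f \cup \operatorname{dom} g$ of the normal variety $X \times Y$, and then to verify that the resulting regular function agrees with $f$ on $\operatorname{dom} f$. Without loss of generality I would first enlarge $\operatorname{dom} g$ to be the maximal domain of regularity of $g$ as a rational function; this preserves the hypothesis $f \equiv g$ on $\operatorname{dom} f \cap \operatorname{dom} g$ because for $(a, b)$ in the enlarged overlap the rational functions $f_a$ and $g_a$ agree on a nonempty open subset of $Y$ and hence coincide as elements of $k(Y)$. Then $D := (X \times Y) \setminus \operatorname{dom} g$ is the pole divisor of $g$, of pure codimension one.

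For each $p = (a, b) \in W \setminus \operatorname{dom} g = \operatorname{dom} f \setminus \operatorname{dom} g$ I need an algebraic curve $C \subset W$ through $p$ with $C \cap \operatorname{dom} g \neq \varnothing$ and $p$ a removable singularity of $g|_C$. When $\{a\} \times Y \not\subset D$, the restriction $g_a$ is a well-defined rational function on $Y$; I choose an algebraic curve $C_0 \subset Y$ through $b$ contained in $\operatorname{dom} f_a \cup \operatorname{dom} g_a$ and meeting $\operatorname{dom} g_a$, and set $C := \{a\} \times C_0 \subset W$. Then $g_a|_{C_0}$ and $f_a|_{C_0}$ agree on the nonempty open set $\operatorname{dom} f_a \cap \operatorname{dom} g_a \cap C_0$, hence coincide as elements of $k(C_0)$; since $f_a$ is regular at $b$, so is $g_a|_{C_0}$, and $p$ is a removable singularity of $g|_C$. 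The case $X \times \{b\} \not\subset D$ is handled symmetrically using $f^b$.

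The main obstacle is to rule out the remaining case where both $\{a\} \times Y \subset D$ and $X \times \{b\} \subset D$. Let $A := \{x \in X : \{x\} \times Y \subset D\}$, a proper closed subset of $X$ (its complement is the image of $\operatorname{dom} g$ under the open projection $\pi_X$). Since $X \times \{b\}$ is irreducible and lies in $D$, it is contained in an irreducible codimension-one component $D_i$ of $D$; then $\pi_X|_{D_i}$ is dominant, and a fibre-dimension calculation combined with the fact that coprimeness of numerator and denominator in a reduced representation of $g$ is preserved for generic fibres shows that $g_x$ has a genuine pole at $b$ for $x$ in a dense open $U \subset X$. On the other hand, since $(a, b) \in \operatorname{dom} f$, $f^b$ is rational with dense open domain; picking $x$ in the nonempty open set $\operatorname{dom} f^b \cap U \setminus A$ we have $(x, b) \in \operatorname{dom} f$, so $f_x$ is rational and regular at $b$, and the argument of the previous paragraph gives $f_x = g_x$ in $k(Y)$. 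Hence $g_x$ is regular at $b$, contradicting the previous sentence.

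Granting the above, Lemma \ref{lemma3_1} produces a regular $h\colon W \to k$ with $h \equiv g$ on $\operatorname{dom} g$. It remains to verify $h \equiv f$ on $\operatorname{dom} f$: on $\operatorname{dom} f \cap \operatorname{dom} g$ this is the hypothesis, and on $\operatorname{dom} f \setminus \operatorname{dom} g$ the value $h(a, b)$ is forced by the removable-singularity extension along $C$ to equal $f_a(b) = f(a, b)$ (or symmetrically $f^b(a) = f(a, b)$). Since $\operatorname{dom} f$ is a nonempty Zariski dense open subset of $X \times Y$ and $f|_{\operatorname{dom} f} = h|_{\operatorname{dom} f}$ is the restriction of the morphism $h$, $f$ is a rational function.
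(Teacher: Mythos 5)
Your overall strategy --- replace $g$ by its maximal regular representative, apply Lemma \ref{lemma3_1} to $g$ on the normal open set $W=\operatorname{dom}f\cup\operatorname{dom}g$ using coordinate curves $\{a\}\times C_0$ or $C_0\times\{b\}$, and then check that the resulting regular extension agrees with $f$ --- is surely the intended route; the paper offers no details at all (its proof is the single sentence ``follows immediately from Lemma \ref{lemma3_1}''), and you have correctly identified that the only genuine obstruction is the case where both $\{a\}\times Y$ and $X\times\{b\}$ lie in the polar locus $D$ of $g$. Your mechanism for excluding that case (use the hypothesis that $f_x$ is rational and regular at $b$ for $x$ in a dense set of the open $\operatorname{dom}f^b$, and play it against a pole of $g_x$ at $b$) is the right one.

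Two steps are asserted rather than proved and deserve real arguments. First, in your opening paragraph, the claim that enlarging $\operatorname{dom}g$ preserves $f\equiv g$ on the overlap is justified by saying $f_a$ and $g_a$ agree on a nonempty open subset of $Y$; but for a particular $a$ the slice $\{a\}\times Y$ may entirely miss the \emph{original} overlap $\operatorname{dom}f\cap\operatorname{dom}g$, in which case that sentence has no content. This is repairable by a standard two-step iteration (first show $f=g$ on the enlarged overlap for generic slices in one direction, then propagate in the other), but as written it is circular at exactly the points where both coordinate slices are bad. Second, and more substantially, the assertion that ``$g_x$ has a genuine pole at $b$ for $x$ in a dense open $U\subset X$'' is the crux of the whole exclusion argument, and ``a fibre-dimension calculation combined with the fact that coprimeness\dots is preserved for generic fibres'' is not a proof --- on a normal but not locally factorial $X\times Y$ there is no global reduced fraction to speak of, and regularity of $g_x$ at $b$ does not follow from, nor is it refuted by, regularity of $g$ at $(x,b)$ (compare $g=x/y$ at the origin, where $g_0$ is regular but $g$ is not). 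The claim is true, but the clean way to see it is to pass to the generic fibre $Y_{k(X)}$: the component $D_i\supseteq X\times\{b\}$ dominates $X$, hence defines a prime divisor $D_{i,k(X)}$ on the normal variety $Y_{k(X)}$ along which $g$ has negative valuation, and the $k$-point $b$ lies on it; one then spreads this out over a dense open subset of $X$. With those two points supplied the proof is complete; without them the exclusion of the doubly-bad case --- which is precisely the content the one-line proof in the paper glosses over --- is not yet established.
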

\begin{proof}
    The desired result follows immediately from Lemma \ref{lemma3_1}.
\end{proof}

\begin{proposition}\label{proposition3_3}
    Let $X_1, \dots, X_n$ be normal varieties over an algebraically closed field $k$ and $f\colon X_1 \times \dots \times X_n \dashrightarrow k$ be a partial function defined on a nonempty Zariski open subset of $X_1 \times \dots \times X_n$, if $f$ satisfies the following conditions:
    \begin{enumerate}[itemsep=5pt]
        \item[$\mathrm{I}$)] for each $\left(a_1, \dots, a_n\right) \in \operatorname{dom}f$ and $i \in\{1, \dots, n\},\quad f_{a_1, \dots, a_{i-1}}^{a_{i+1}, \dots, a_n}\colon X_i \dashrightarrow k$ is a rational function,
        \item[$\mathrm{II}$)] there exists Zariski dense subset $\Lambda_i$ of $X_i$ for each $i \in\{1, \dots, n\}$ and a rational function $g\colon X_1 \times \dots \times X_n \dashrightarrow k$, such that $f\left(a_1, \dots, a_n\right)=g\left(a_1, \dots, a_n\right)$ for all $\left(a_1, \dots, a_n\right) \in\left(\Lambda_1 \times \dots \times \Lambda_n\right) \bigcap$ $\operatorname{dom}f \bigcap \operatorname{dom}g$,
    \end{enumerate}
    then $f$ is a rational function.
\end{proposition}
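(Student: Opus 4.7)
The plan is induction on $n$. For the base case $n = 1$, the statement is immediate: $f$ coincides with its own $1$-variable slice, hence is rational by hypothesis (I).

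For the inductive step, suppose the proposition holds for $n-1$ variables. I would first upgrade the approximate agreement in (II) to a global equality $f(a_1,\dots,a_n) = g(a_1,\dots,a_n)$ on $\operatorname{dom} f \cap \operatorname{dom} g$. The idea is to remove the restriction to $\Lambda_i$ one coordinate at a time: for each $i$, fix the remaining coordinates in a Zariski dense subset (chosen to include the surviving $\Lambda_j$'s and to ensure that the $i$-th slice of $g$ is a well-defined rational function on $X_i$). By hypothesis (I) and the rationality of $g$, the $i$-th slices of $f$ and $g$ are rational functions on $X_i$ agreeing on the dense set $\Lambda_i$ intersected with their respective domains, hence they coincide as rational functions on $X_i$. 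Iterating over $i = 1, \dots, n$ strips off all the $\Lambda_i$'s and produces the desired global equality on $\operatorname{dom} f \cap \operatorname{dom} g$.

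Next, for every $a_n$ in the Zariski dense open subset $U \subseteq X_n$ on which the partial function $(a_1, \dots, a_{n-1}) \mapsto g(a_1, \dots, a_{n-1}, a_n)$ is a well-defined rational function on $X_1 \times \dots \times X_{n-1}$, the analogous slice of $f$ satisfies the hypotheses of Proposition \ref{proposition3_3} for $n-1$ variables: its $1$-variable slices are themselves slices of $f$, hence rational by (I); and the slice of $g$ above, together with the dense subsets $\Lambda_1, \dots, \Lambda_{n-1}$, supplies condition (II) thanks to the global equality obtained in the previous paragraph. The inductive hypothesis then gives that this slice of $f$ at $a_n$ is rational on $X_1 \times \dots \times X_{n-1}$.

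Finally, I would apply Corollary \ref{corollary3_2} with $X = X_1 \times \dots \times X_{n-1}$ and $Y = X_n$: the slice in the $Y$-direction is rational by (I), the slice in the $X$-direction is rational by the preceding paragraph, and $f$ coincides with $g$ on $\operatorname{dom} f \cap \operatorname{dom} g$. This should yield that $f$ is a rational function. The main obstacle I anticipate lies in this last step: the induction supplies the $X$-direction slice as a rational function only for $a_n \in U$, whereas the stated hypothesis of Corollary \ref{corollary3_2} calls for it at every $a_n$ in the image of $\operatorname{dom} f$ under the projection to $X_n$. I would bridge this gap by first applying the corollary to the restriction of $f$ to $\operatorname{dom} f \cap (X_1 \times \dots \times X_{n-1} \times U)$, and then extending the resulting rational function past the codimension-one exceptional stratum $(X_1 \times \dots \times X_{n-1} \times (X_n \setminus U)) \cap \operatorname{dom} f$ via Lemma \ref{lemma3_1}, using axis-line curves (whose rationality and removability at exceptional points are guaranteed by (I)) together with Hartogs' extension theorem to absorb any residual codimension at least two locus.
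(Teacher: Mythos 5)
Your proposal is correct and follows essentially the same route as the paper: the heart of both arguments is the coordinate-by-coordinate slicing that upgrades the equality $f=g$ from $\Lambda_1\times\dots\times\Lambda_n$ to all of $\operatorname{dom}f\cap\operatorname{dom}g$ (two one-variable rational functions agreeing on a Zariski dense set coincide), followed by an appeal to Corollary \ref{corollary3_2}. The paper merely asserts the reduction to $n=2$ ``by induction,'' whereas your last two paragraphs spell out --- and patch via Lemma \ref{lemma3_1} --- the resulting issue that the slices of $g$ need not be well-defined rational functions at every value of the last coordinate; this is more care than the paper itself takes.
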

\begin{proof}
    By induction it suffices to prove the statement for $n=2$.

    By virtue of Corollary \ref{corollary3_2} we can assume w.l.o.g. that $\operatorname{dom}f=\operatorname{dom}g$ and $X_1=\{a \in X_1 \mid \exists\ b \in X_2$ $(a,b)\left.\in\operatorname{dom}f\ \right\}$.

    For any $b \in \Lambda_2$, since $\Lambda_1$ is a Zariski dense subset of $X_1$ and $f^b(a)=g^b(a)$ for all $a\in \Lambda_1 \bigcap\operatorname{dom} f^b$, we obtain that $f^b \equiv g^b$.

    For any $a \in X_1$, since $\Lambda_2$ is a Zariski dense subset of $X_2$ and $f_a(b)=f^b(a)=g^b(a)=g_a(b)$ for all $b\in \Lambda_2 \bigcap \operatorname{dom}f_a$, we obtain that $f_a \equiv g_a$. Therefore we have $f \equiv g$.
\end{proof}
\begin{lemma}\label{lemma3_4}
    Let $k[t]$ be the polynomial ring over an arbitrary field $k$ and let $\mathbb{F}$ be a subfield of $k$, then $k(t) \bigcap \mathbb{F}((t))=\mathbb{F}(t)$.
\end{lemma}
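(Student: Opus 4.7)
The plan is to prove the nontrivial inclusion $k(t)\cap\mathbb{F}((t))\subseteq\mathbb{F}(t)$ by a linear-algebra descent argument. After multiplying by a suitable power of $t$, which lies in $\mathbb{F}(t)$, I may assume $f\in\mathbb{F}[[t]]$, say $f=\sum_{i\geq 0}c_it^i$ with $c_i\in\mathbb{F}$. Writing $f=p/q$ with $p,q\in k[t]$ coprime, the absence of a pole at the origin forces $q(0)\neq 0$, so I may normalize $q=1+q_1t+\dots+q_dt^d$ with $q_j\in k$ and $d=\deg q$; set $d'=\deg p$. Comparing the coefficient of $t^n$ in $qf=p$ for $n$ sufficiently large yields the homogeneous linear recurrence $\sum_{j=0}^{d}q_jc_{n-j}=0$ with $q_0=1$.

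The key observation is that this recurrence is a system of linear equations in the unknowns $(q_0,\dots,q_d)$ whose coefficients are the $c_i$, hence lie already in $\mathbb{F}$. Let $W\subseteq\mathbb{F}^{d+1}$ and $W_k\subseteq k^{d+1}$ denote its solution spaces over $\mathbb{F}$ and $k$ respectively; then $W_k=W\otimes_{\mathbb{F}}k$, and $W_k$ contains the vector $(1,q_1,\dots,q_d)$ by the previous paragraph. The coordinate projection $W_k\to k$ onto the zeroth slot is therefore nonzero, which forces its $\mathbb{F}$-form $W\to\mathbb{F}$ to be nonzero as well. I may thus select $(1,a_1,\dots,a_d)\in W$ and set $Q=1+a_1t+\dots+a_dt^d\in\mathbb{F}[t]$; the product $Qf$ then has coefficients in $\mathbb{F}$ and vanishes past a fixed degree, so $Qf\in\mathbb{F}[t]$ and $f=(Qf)/Q\in\mathbb{F}(t)$.

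The step I expect to be the main obstacle is precisely this descent: it is not enough merely to find some nontrivial $\mathbb{F}$-linear relation among the $c_i$, one must find one whose zeroth coefficient is nonzero, otherwise the resulting polynomial $Q$ would be divisible by $t$ and fail to serve as a denominator. The resolution rests on the extension-of-scalars identification $W_k=W\otimes_{\mathbb{F}}k$: a coordinate functional $W\to\mathbb{F}$ that vanished identically would extend to the zero functional $W_k\to k$, so the nonvanishing of the zeroth projection on $W_k$ automatically propagates down to $W$.
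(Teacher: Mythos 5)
Your argument is correct and complete, and it correctly isolates the one delicate point: it is not enough to find some nontrivial $\mathbb{F}$-linear relation among the coefficients, one needs a denominator with nonvanishing constant term, and your descent via $W_k=W\otimes_{\mathbb{F}}k$ handles this soundly (the solution space of the, possibly infinite, linear system in $d+1$ unknowns is compatible with extension of scalars because the row space is finite-dimensional and base change preserves its dimension). Note, however, that the paper does not actually prove this lemma; it simply cites Lemma 27.9 of Milne's \emph{Lectures on \'Etale Cohomology}, which is essentially this exact statement and is proved there by the same linear-algebra descent on the recurrence satisfied by the coefficients. So your proposal is best described not as a different route but as a self-contained proof of the black-boxed ingredient, whose value is removing the external dependency. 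A shorter alternative, closer to the paper's own toolkit, is to invoke its Lemma 3.6: the Hankel-determinant criterion for rationality of $\sum a_i t^i$ involves only determinants of matrices with entries $a_i\in\mathbb{F}$, and such a matrix is degenerate over $k$ if and only if it is degenerate over $\mathbb{F}$, so membership in $k(t)$ and in $\mathbb{F}(t)$ are equivalent for a power series with coefficients in $\mathbb{F}$; your preliminary multiplication by a power of $t$ then disposes of the Laurent tail.
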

\begin{proof}
    This is a direct consequence of Lemma 27.9 in \cite{2}.
\end{proof}
\begin{proposition}\label{proposition3_5}
    Let $n \in \mathbb{N}$ and $k\left[x_0, \dots, x_{n+1}\right]$ be the ring of polynomials in $n+2$ variables over an arbitrary field $k$, then $k\left(x_0, \dots, x_{n+1}\right)=\bigcap_{i=0}^{n+1} k\left(\left(x_0, \dots, x_{i-1}, x_{i+1}, \dots, x_{n+1}\right)\right)\left(x_i\right)$. 
\end{proposition}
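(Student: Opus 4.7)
The plan is to view every field in sight as a subfield of the common ambient $F := \operatorname{Frac}(k[[x_0, \ldots, x_{n+1}]])$, where I interpret $k((y_1, \ldots, y_r))$ as $\operatorname{Frac}(k[[y_1, \ldots, y_r]])$; the natural inclusions $k[[x_0, \ldots, \hat{x_i}, \ldots, x_{n+1}]][x_i] \hookrightarrow k[[x_0, \ldots, x_{n+1}]]$ make this consistent and they immediately give the trivial inclusion $k(x_0, \ldots, x_{n+1}) \subseteq \bigcap_{i=0}^{n+1} k((x_0, \ldots, \hat{x_i}, \ldots, x_{n+1}))(x_i)$. I will prove the nontrivial inclusion by induction on $n$, with Lemma \ref{lemma3_4} as the workhorse.

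For the base case $n = 0$, the first step is to verify the auxiliary inclusion $k((x_0))(x_1) \subseteq k(x_1)((x_0))$: any $P/Q$ with $P, Q \in k((x_0))[x_1]$ has the form $\sum_{j=0}^{d} a_j(x_0)\, x_1^j$, finite in $j$, so expanding each $a_j$ as a Laurent series in $x_0$ and swapping summations exhibits $P$ and $Q$ as elements of $k[x_1]((x_0))$, and the quotient then lies in $k(x_1)((x_0))$. Lemma \ref{lemma3_4}, applied with $k((x_1))$ as the base field, $k(x_1)$ as the subfield, and $x_0$ as the indeterminate, then yields $k((x_1))(x_0) \cap k(x_1)((x_0)) = k(x_1)(x_0) = k(x_0, x_1)$, finishing the base case.

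For the inductive step I would promote $x_0$ to the base field by setting $K := k((x_0))$. The same regrouping idea (a formal power series in several variables is a formal power series in some variables whose coefficients are power series in the others) gives $k((x_0, x_1, \ldots, \hat{x_i}, \ldots, x_{n+1})) \subseteq \operatorname{Frac}(K[[x_1, \ldots, \hat{x_i}, \ldots, x_{n+1}]])$ for each $i \geq 1$, which places $f$ inside the analogous intersection over $K$ in the $n+1$ variables $x_1, \ldots, x_{n+1}$. The inductive hypothesis then puts $f$ in $K(x_1, \ldots, x_{n+1}) = k((x_0))(x_1, \ldots, x_{n+1})$; a second regrouping shows that this field sits inside $k(x_1, \ldots, x_{n+1})((x_0))$, and combining with $f \in k((x_1, \ldots, x_{n+1}))(x_0)$ and a final application of Lemma \ref{lemma3_4} yields $f \in k(x_1, \ldots, x_{n+1})(x_0) = k(x_0, \ldots, x_{n+1})$.

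The main obstacle is not conceptual depth but bookkeeping: one has to fix an unambiguous interpretation of the multivariable Laurent symbol, verify that all fields being intersected embed compatibly inside the common overfield $F$, and check that the two regrouping inclusions—one showing that a power series in many variables is a power series in some with coefficients in the others, the second showing that a rational function in several variables with power-series coefficients is itself a Laurent series with rational coefficients—really hold as inclusions of subfields of $F$. Once this scaffolding is in place, the induction on $n$ together with two applications of Lemma \ref{lemma3_4} completes the proof cleanly.
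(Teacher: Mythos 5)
Your proposal is correct and follows essentially the same route as the paper: induction on $n$, the regrouping inclusions among iterated power/Laurent series fields, and a final application of Lemma \ref{lemma3_4} to an intersection of the form $k'(x_0)\cap \mathbb{F}((x_0))$ (the paper symmetrizes over $x_0$ and $x_{n+1}$ and invokes the inductive hypothesis twice, whereas you invoke it once over the base field $k((x_0))$, a minor streamlining). The only caution is the one you already flag: the common overfield for the final intersection should be taken as $k((x_1,\dots,x_{n+1}))((x_0))$ rather than $\operatorname{Frac}(k[[x_0,\dots,x_{n+1}]])$ itself, since $k(x_1,\dots,x_{n+1})((x_0))$ does not embed in the latter.
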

\begin{proof}
    Define $K:=k\left(x_1, \dots, x_n\right)$ and define $L_i:=k\left(\left(x_0, \dots, x_{i-1}, x_{i+1}, \dots, x_{n+1}\right)\right)\left(x_i\right)$ for each $i=0, \dots, n+1$. For simplicity we denote $x:=x_0$ and $y:=x_{n+1}$.

    Since $L_i \subseteq k((y))\left(\left(x_0, \dots, x_{i-1}, x_{i+1}, x_n\right)\right)\left(x_i\right)$ for all $i \in\{0, \dots, n\}$, by induction we obtain $\bigcap_{i=0}^n L_i \subseteq k((y))\left(x_0, \dots, x_n\right) \subseteq k\left(x_0, \dots, x_n\right)((y))(x)=K((y))(x)$. Analogously we have $\bigcap_{i=1}^{n+1} L_i \subseteq K((x))(y) \subseteq K(y)((x))$. By Lemma \ref{lemma3_4} we conclude that $\bigcap_{i=0}^{n+1} L_i \subseteq K((y))(x) \bigcap K(y)((x))=(K(y))(x)=k\left(x_0, \dots, x_{n+1}\right)$.

    Conversely, since $k\left(x_0, \dots, x_{n+1}\right) \subseteq L_i$ for all $i \in\{0, \dots, n+1\}$, we obtain $k\left(x_0, \dots, x_{n+1}\right) \subseteq \bigcap_{i=0}^{n+1} L_i$.
\end{proof}
\begin{lemma}\label{lemma3_6}
    Let $k[t]$ be the polynomial ring over an arbitrary field $k$ and let $\sum_{i=0}^{\infty} a_i t^i \in k[[t]]$ be a formal power series with coefficients in $k$, then $\sum_{i=0}^{\infty} a_i t^i \in k(t)$ if and only if there exists $l, m \in \mathbb{N}$ such that the matrices 
    $$
    \left[\begin{array}{cccc}
    a_n & a_{n+1} & \dots & a_{n+m} \\
    a_{n+1} & a_{n+2} & \dots & a_{n+m+1} \\
    \vdots & \vdots & \ddots & \vdots \\
    a_{n+m} & a_{n+m+1} & \dots & a_{n+2 m}
    \end{array}\right]
    $$
    are degenerated for all integers $n \geqslant l$.
\end{lemma}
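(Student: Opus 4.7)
The plan is to prove the two implications separately. The forward direction is a direct computation from the rational form of $\sum_{i \ge 0} a_i t^i$, while the reverse direction is a reformulation of Kronecker's classical criterion for rationality of a formal power series.

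For $(\Rightarrow)$, I would write $\sum_{i \ge 0} a_i t^i = P(t)/Q(t)$ with $Q(t) = \sum_{j=0}^m q_j t^j$ and $q_m \neq 0$ (one may freely enlarge $m$ by padding $Q$ with zero higher-order coefficients). Clearing denominators and comparing coefficients of $t^N$ for $N > \deg P$ produces the linear recurrence $\sum_{i=0}^m q_{m-i}\, a_{n+i} = 0$ valid for every $n \ge l := \max\{0, \deg P - m + 1\}$. The nonzero vector $(q_{m-i})_{i=0}^m \in k^{m+1}$ then annihilates every row of the Hankel matrix $H_n := (a_{n+i+j})_{i,j=0}^m$, since the $j$-th row-relation is exactly the recurrence applied at the index $n+j \ge l$; hence $\det H_n = 0$.

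For $(\Leftarrow)$, the shift $(a_n) \mapsto (a_{n+l})$ reduces the problem to $l = 0$ (this preserves rationality up to a polynomial correction). The aim is then to produce a nonzero $(q_0, \dots, q_m) \in k^{m+1}$ and a threshold $n^*$ such that $\sum_{i=0}^m q_i\, a_{n+i} = 0$ for all $n \ge n^*$, since the existence of such an eventual recurrence is equivalent to $\sum a_n t^n \in k(t)$. I would argue by induction on $m$. The base case $m = 0$ is immediate. For the step, set $H'_n := (a_{n+i+j})_{i,j=0}^{m-1}$. If $\det H'_n = 0$ for all sufficiently large $n$, the inductive hypothesis supplies a recurrence of length $\le m$. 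Otherwise, fix $n_0$ with $\det H'_{n_0} \neq 0$; the singularity of $H_{n_0}$ then forces its last column to be a unique linear combination of the first $m$, producing coefficients $\beta_0, \dots, \beta_{m-1}$ such that $a_{N+m} = \sum_{j=0}^{m-1} \beta_j\, a_{N+j}$ for every $N \in \{n_0, \dots, n_0 + m\}$.

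The crux is extending this local recurrence to all $N \ge n_0$. Under the clean assumption $\det H'_n \neq 0$ for every $n \ge n_0$, the analogous extraction at $n_0 + 1$ delivers coefficients $\beta'$; the overlap at $N \in \{n_0+1, \dots, n_0+m\}$ then gives $m$ equations in the $m$ differences $\beta_j - \beta'_j$ whose coefficient matrix is the nonsingular $H'_{n_0+1}$, forcing $\beta = \beta'$ and propagating the recurrence by one step. Iteration finishes this regime. The main obstacle is the mixed regime in which $\det H'_{n'} = 0$ for some $n' \ge n_0$: there the uniqueness argument collapses, and one must either reapply the inductive hypothesis on $m$ to the maximal sub-range on which $\det H'_n$ vanishes (producing a shorter eventual recurrence that can then be compared against $\beta$), or, at worst, invoke Kronecker's full theorem on finite-rank Hankel matrices as a black box to conclude $\sum a_n t^n \in k(t)$ directly from the assumption that every $(m+1) \times (m+1)$ Hankel determinant of the shifted sequence vanishes.
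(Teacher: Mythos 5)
Your forward direction is complete and correct: the recurrence $\sum_{i=0}^{m} q_{m-i}a_{n+i}=0$ extracted from $Q(t)\sum a_i t^i=P(t)$ does annihilate every row of the Hankel matrix once $n\geqslant l$, so those matrices are singular. The reverse direction, however, has a genuine gap, and it is exactly the one you flag yourself: the ``mixed regime'' in which $\det H'_n\neq 0$ for some but not all $n\geqslant n_0$. Neither of your proposed repairs closes it. The inductive hypothesis for $m-1$ requires $\det H'_n=0$ for \emph{all} sufficiently large $n$, so it cannot be ``reapplied on a maximal sub-range''; and invoking Kronecker's theorem as a black box is circular here, because the statement being proved \emph{is} a (shifted-window) form of Kronecker's rationality criterion --- the hypothesis that every contiguous $(m+1)\times(m+1)$ Hankel minor vanishes does not obviously imply that the infinite Hankel matrix has finite rank, and that implication is precisely the content of the hard direction.

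The classical repair is to choose $m$ \emph{minimal} with the stated vanishing property; minimality forces $\det H_n^{m-1}\neq 0$ for arbitrarily large $n$, and one then shows that once $\det H_{n_0}^{m-1}\neq 0$ the nonvanishing (or at least the rank condition needed for propagation) persists for all $n\geqslant n_0$, using the Desnanot--Jacobi identity for Hankel matrices, $\det H_n^{m}\cdot\det H_{n+2}^{m-2}=\det H_n^{m-1}\cdot\det H_{n+2}^{m-1}-\bigl(\det H_{n+1}^{m-1}\bigr)^2$, together with a rank argument on the $m\times(m+1)$ block of rows shared by $H_n^{m}$ and $H_{n+1}^{m}$. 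This makes your ``clean regime'' propagation unconditional; without some such device the induction does not close. For what it is worth, the paper does not prove this lemma at all --- it cites it verbatim as Lemma 5 of Chapter V.5 of Koblitz --- so you are attempting strictly more than the author does, but the attempt as written is incomplete.
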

\begin{proof}
    This is a direct consequence of Lemma 5 of Chapter V.5 in \cite{3}.
\end{proof}
\begin{proposition}\label{proposition3_7}
    Let $A:=k\left\langle x_1, \dots, x_l\right\rangle$ be the $l$-dimensional Tate algebra over a complete non-Archimedean field $k$ and $\sum_{i=0}^{\infty} h_i t^i \in A[[t]]$ be a formal power series with coefficients in $A$,\quad if $\sum_{i=0}^{\infty} h_i\left(a_1, \dots, a_{l}\right) t^i \in k(t)$ for all $a_1, \dots, a_n \in\{a \in k:|a| \leqslant 1\}$ then $\sum_{i=0}^{\infty} h_i t^i \in \operatorname{Frac}(A[t])$.
\end{proposition}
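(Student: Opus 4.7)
The plan is to combine Lemma \ref{lemma3_6} applied pointwise on the unit polydisc with a Baire-category argument that isolates a single Hankel-rank pair valid for every specialization, and then to invoke Lemma \ref{lemma3_6} once more over $\operatorname{Frac}(A)$.

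Write $k^\circ:=\{a\in k:|a|\leqslant 1\}$ and, for each $(N,m)\in\mathbb{N}^2$, set
\[
E_{N,m}:=\Big\{(a_1,\dots,a_l)\in (k^\circ)^l\ \Big|\ \det\bigl[h_{n+i+j}(a_1,\dots,a_l)\bigr]_{i,j=0}^{m}=0\ \text{for every}\ n\geqslant N\Big\}.
\]
Every element of $A$ restricts to a continuous $k$-valued function on $(k^\circ)^l$, so each Hankel determinant is continuous on $(k^\circ)^l$ and each $E_{N,m}$ is closed. Applying Lemma \ref{lemma3_6} to the specialization $\sum_{i\geqslant 0}h_i(a_1,\dots,a_l)t^i\in k[[t]]$ exhibits $(k^\circ)^l=\bigcup_{N,m\geqslant 0}E_{N,m}$. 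Since $k$ is complete with non-trivial absolute value and $k^\circ$ is closed in $k$, the polydisc $(k^\circ)^l$ is a non-empty complete (ultra)metric space, hence a Baire space, and some $E_{N_0,m_0}$ has non-empty interior in $(k^\circ)^l$.

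The crucial step is to transfer this local vanishing to $A$ itself via the identity principle for the Tate algebra: any $f\in A$ that vanishes on a non-empty open subset of $(k^\circ)^l$ is the zero element of $A$. (Pick an interior zero $(b_1,\dots,b_l)$ inside the open set and expand $f$ as a convergent power series in $x_i-b_i$; local vanishing forces every coefficient to be $0$.) Applying this observation to every Hankel determinant $\det[h_{n+i+j}]_{i,j=0}^{m_0}\in A$ with $n\geqslant N_0$ shows that each of them is zero in $A$.

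Finally, since $A$ is an integral domain, $\operatorname{Frac}(A)$ is a field. Viewing $\sum_{i\geqslant 0}h_i t^i$ as a formal power series over $\operatorname{Frac}(A)$ and applying Lemma \ref{lemma3_6} to the Hankel vanishings just established yields $\sum_{i\geqslant 0}h_i t^i\in \operatorname{Frac}(A)(t)=\operatorname{Frac}(A[t])$, which is the conclusion sought. I expect the identity-principle step to be the main obstacle: it is here that the Tate-algebra hypothesis (rather than that of an arbitrary $k$-algebra) enters essentially, supplying a rigid-analytic structure under which elements of $A$ are bona-fide continuous functions admitting a convergent local Taylor expansion; without it the Baire-category dichotomy would be powerless to force a global identity in $A$.
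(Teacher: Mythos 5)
Your argument is essentially identical to the paper's proof: the same Hankel-determinant sets (your $E_{N,m}$ are the paper's $\Lambda_n^m$), the same appeal to Lemma \ref{lemma3_6} pointwise, the same Baire category step on the complete metric space $\mathfrak{o}_k^l$, the same identity principle for restricted power series to kill the Hankel determinants in $A$, and the same final application of Lemma \ref{lemma3_6} over $\operatorname{Frac}(A)$. The proposal is correct and matches the paper's route.
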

\begin{proof}
    Denote by $\mathfrak{o}_k:=\{a \in k:|a| \leqslant 1\}$ the valuation ring of $k$. For any $n, m \in \mathbb{N}$ define the Hankel determinant by
    $$
    H_n^m:=\left|\begin{array}{cccc}
    h_n & h_{n+1} & \dots & h_{n+m} \\
    h_{n+1} & h_{n+2} & \dots & h_{n+m+1} \\
    \vdots & \vdots & \ddots & \vdots \\
    h_{n+m} & h_{n+m+1} & \dots & h_{n+2 m}
    \end{array}\right|
    $$
    and define $\Lambda_n^m:=\left\{\left(a_1, \dots, a_l\right) \in \mathfrak{o}_k^l \mid H_{n+i}^m\left(a_1, \dots, a_l\right)=0 \forall i \in \mathbb{N}\right\}$. Since $H_n^m \in A$ for all $n, m \in \mathbb{N}$, we obtain that $\Lambda_n^m$ are closed subsets of $\mathfrak{o}_k^l$ for all $(n, m) \in \mathbb{N}^2$. since $\sum_{i=0}^{\infty} h_i\left(a_1, \dots, a_l\right) t^i \in k(t)$ for all $\left(a_1, \dots, a_l\right) \in \mathfrak{o}_k^l$, by Lemma \ref{lemma3_6} $\mathfrak{o}_k^l=\bigcup_{n=0}^{\infty} \bigcup_{m=0}^{\infty} \Lambda_n^m$. Since $\mathfrak{o}_k^l$ is a complete metric space, by Baire's category theorem, there exists $s, r \in \mathbb{N}$ such that $\Lambda_s^r$ admits a nonempty interior. Therefore, for all $i \in \mathbb{N}$, by construction $H_{s+i}^r$ vanishes on a nonempty open subset of $\mathfrak{o}_k^l$ and hence is identically zero by the analytic continuation principle for restricted power series. Again by Lemma \ref{lemma3_6}, we conclude that $\sum_{i=0}^{\infty} h_i t^i \in(\operatorname{Frac} A)(t)=\operatorname{Frac}(A[t])$.
\end{proof}
\begin{proposition}\label{proposition3_8}
    Let $k\left[x_1, \dots, x_l\right]$ be the ring of polynomial in $l$ variables over a complete non-Archimedean field $k$ and let $\sum_{i_1=0}^{\infty} \dots \sum_{i_n=0}^{\infty} c_{i_1\dots i_n} x_1^{i_1} \dots x_l^{i_l} \in k\left\langle x_1, \dots, x_l\right\rangle$ be a power series converge on the $l$-fold cartesian product $\mathfrak{o}_k^l$ of the valuation ring $\mathfrak{o}_k$ of $k$, if 
    $$
    \sum_{i_1=0}^{\infty} \dots \sum_{i_n=0}^{\infty}\left(c_{i_1, \dots i_n} a_1^{i_1} \dots a_{n-1}^{i_{n-1}} a_{n+1}^{i_{n+1}} \dots a_{l}^{i_l}\right) x_n^{i_n} \in k\left(x_n\right)
    $$
    for each $\left(a_1, \dots, a_{l}\right) \in \mathfrak{o}_k^l$ and $n \in\{1, \dots, l\}$, then $\sum_{i_1=0}^{\infty} \dots \sum_{i_n=0}^{\infty}$ $c_{i_1, \dots i_n} x_1^{i_1} \dots x_l^{i_l} \in k\left(x_1, \dots, x_l\right)$.
\end{proposition}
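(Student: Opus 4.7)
The plan is to reduce Proposition~\ref{proposition3_8} to $l$ separate applications of the one-variable rationality criterion from Proposition~\ref{proposition3_7}, and then glue the $l$ resulting inclusions together using the algebraic intersection identity from Proposition~\ref{proposition3_5}.

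First I would dispose of the trivial case $l=1$, in which the statement is identical to the hypothesis. Assuming henceforth $l\geq 2$, I fix $n\in\{1,\dots,l\}$ and rearrange $f:=\sum c_{i_1\dots i_l}x_1^{i_1}\cdots x_l^{i_l}$ by grouping monomials according to the exponent of $x_n$, obtaining $f=\sum_{j=0}^{\infty}h_{n,j}x_n^{j}$ where each $h_{n,j}$ is a power series in the remaining $l-1$ variables. Because $|c_{i_1\dots i_l}|\to 0$ as $\max_{r}i_r\to\infty$, the coefficients of $h_{n,j}$ also tend to zero, so $h_{n,j}$ lives in the Tate algebra $A_n:=k\langle x_1,\dots,x_{n-1},x_{n+1},\dots,x_l\rangle$.

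Next, the hypothesis of Proposition~\ref{proposition3_8} says precisely that for every $(a_1,\dots,a_{n-1},a_{n+1},\dots,a_l)\in\mathfrak{o}_k^{l-1}$ the specialization $\sum_{j}h_{n,j}(a_1,\dots,a_{n-1},a_{n+1},\dots,a_l)\,x_n^{j}$ lies in $k(x_n)$; Proposition~\ref{proposition3_7} therefore yields $f\in\operatorname{Frac}(A_n[x_n])$. Since $A_n$ embeds into the formal power series ring $k[[x_1,\dots,x_{n-1},x_{n+1},\dots,x_l]]$, its fraction field embeds into $k((x_1,\dots,x_{n-1},x_{n+1},\dots,x_l))$, and consequently $\operatorname{Frac}(A_n[x_n])$ embeds into $L_n:=k((x_1,\dots,x_{n-1},x_{n+1},\dots,x_l))(x_n)$. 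Hence $f\in L_n$ for every $n\in\{1,\dots,l\}$.

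To conclude, I would apply Proposition~\ref{proposition3_5} (after a cosmetic reindexing to match that proposition's $0$-based indexing) to obtain $\bigcap_{n=1}^{l}L_n=k(x_1,\dots,x_l)$, so $f$ is a rational function. The main step is the second one: I need to check both that the coefficients $h_{n,j}$ inherit strict convergence in the remaining $l-1$ variables so that Proposition~\ref{proposition3_7} is applicable, and that the separate rationality hypothesis of Proposition~\ref{proposition3_8} translates faithfully into the specialization hypothesis of Proposition~\ref{proposition3_7} after the regrouping. Once that bookkeeping is settled, the rest of the argument is essentially a diagram chase through inclusions of fraction fields.
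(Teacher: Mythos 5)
Your proposal is correct and follows essentially the same route as the paper's own proof: for each $n$ you regroup the series by powers of $x_n$, apply Proposition~\ref{proposition3_7} to place it in $\operatorname{Frac}\bigl(k\langle x_1,\dots,x_{n-1},x_{n+1},\dots,x_l\rangle[x_n]\bigr)\subseteq k\bigl(\bigl(x_1,\dots,x_{n-1},x_{n+1},\dots,x_l\bigr)\bigr)\bigl(x_n\bigr)$, and then intersect over all $n$ via Proposition~\ref{proposition3_5}. The bookkeeping you flag (strict convergence of the regrouped coefficients and the faithfulness of the specialization hypothesis) is exactly the content the paper leaves implicit, and it goes through as you expect.
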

\begin{proof}
    If $l=1$ then the statement holds trivially. For the subsequent proof we shall assume $l \geqslant 2$. By Proposition \ref{proposition3_7} we have that $\sum_{i_1=0}^{\infty} \dots \sum_{i_n=0}^{\infty} c_{i_1, \dots i_n} x_1^{i_1} \dots x_l^{i_l} \in \operatorname{Frac}\left(k\left\langle x_1, \dots, x_{n-1}, x_{n+1}, \dots, x_l\right\rangle\left[x_n\right]\right)$ $\subseteq k\left(\left(x_1, \dots, x_{n-1}, x_{n+1}, \dots, x_l\right)\right) \left(x_n\right)$ for all $n=1, \dots, l$. The desired result then follows from Proposition \ref{proposition3_5}.
\end{proof}
\begin{theorem}\label{theorem3_9}
    Let $k\left[x_1, \dots, x_n\right]$ be the ring of polynomials in $n$ variables over an algebraically closed complete non-Archimedean field $k$ and let $f\colon \mathbb{D}^n \rightarrow k$ be an arbitrary function defined on the $n$-fold cartesian product of the unit disc $\mathbb{D}:=\{a \in k:|a|<1\}$ in $k$, if for each $\left(a_1, \dots, a_n\right) \in \mathbb{D}^n$ and each $j \in\{1, \dots, n\}$ there exists a power series $\sum_{i=0}^{\infty} b_i x_j^i \in k\left\{x_j\right\}$ converges in $\mathbb{D}$ such that $f\left(a_1, \dots, a_{i-1}, a, a_{i+1}, \dots, a_n\right)=\sum_{i=0}^{\infty} b_i a^i$ for all $a \in \mathbb{D}$, then there exists a power series $\sum_{i_1=0}^{\infty} \dots$ $\sum_{i_n=0}^{\infty} c_{i_1 \dots i_n} x_1^{i_1} \dots x_n^{i_n} \in k\left\{x_1, \dots, x_n\right\}$ converge in $\mathbb{D}^n$ such that $f\left(a_1, \dots, a_n\right)=\sum_{i_1=0}^{\infty} \dots $ $\sum_{i_n=0}^{\infty} c_{i_1 \dots i_n}$ $a_1^{i_1} \dots a_n^{i_n}$ for all $\left(a_1, \dots, a_n\right) \in \mathbb{D}^n$.
\end{theorem}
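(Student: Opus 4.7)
The plan is to prove Theorem 3.9 by induction on $n$, mirroring the classical Hartogs proof on separate holomorphicity but replacing Cauchy integral estimates with non-Archimedean Baire category. The base case $n = 1$ is immediate from the hypothesis.

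For the inductive step, the first task is to extract analytic ``Taylor coefficients'' in the last variable. By hypothesis, for each $a \in \mathbb{D}^{n-1}$ there is a unique power series $\sum_{i} c_i(a)\, x_n^i$ converging on $\mathbb{D}$ with $f(a, x_n) = \sum_i c_i(a)\, x_n^i$ for every $x_n \in \mathbb{D}$. I would show each $c_i\colon \mathbb{D}^{n-1} \to k$ itself satisfies the separate analyticity hypothesis and hence is analytic by induction. For $i = 0$ this is clear, since $c_0(a) = f(a, 0)$ and fixing all but one $a_j$ and evaluating a two-variable convergent power series in $(a_j, x_n)$ at $x_n = 0$ yields a convergent power series in $a_j$. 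For $i \geq 1$, define $h_i(a, b) := b^{-i}\bigl(f(a, b) - \sum_{j < i} c_j(a)\, b^j\bigr)$ for $b \neq 0$ and extend by $h_i(a, 0) := c_i(a)$; one checks that $h_i$ inherits separate analyticity on $\mathbb{D}^n$, so $c_i(\cdot) = h_i(\cdot, 0)$ is separately analytic on $\mathbb{D}^{n-1}$ and hence analytic by the inductive hypothesis.

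The second phase produces uniform coefficient bounds via Baire category. Fix a closed polydisc $P \subseteq \mathbb{D}^{n-1}$, which is a complete metric space. For $\rho \in (0, 1)$ and $M \in \mathbb{N}$ set $E_{\rho, M} := \{a \in P : |c_i(a)| \leq M \rho^{-i} \text{ for all } i\}$. Continuity of each $c_i$ makes $E_{\rho, M}$ closed, and pointwise convergence of $\sum c_i(a)\, b^i$ at $|b| = \rho < 1$ gives $P = \bigcup_M E_{\rho, M}$. Baire's theorem yields a closed sub-polydisc $P' \subseteq P$ and a constant $M$ such that $|c_i(a)| \leq M \rho^{-i}$ uniformly on $P'$. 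Expanding each analytic $c_i$ around the centre of $P'$ as a convergent power series and combining with this uniform bound through standard non-Archimedean estimates produces a joint power series representation of $f$ on a product polydisc around the chosen centre; a coordinate translation then shows $f$ is locally a convergent power series near every point of $\mathbb{D}^n$.

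Finally, I would exhaust $\mathbb{D}^n$ by an increasing sequence of closed polydiscs $\prod_j \{|x_j| \leq \rho_m\}$ with $\rho_m \uparrow 1$, run the previous phases on each, and invoke uniqueness of power series expansion to patch the local data into a single global series $\sum c_{i_1 \ldots i_n}\, x_1^{i_1} \cdots x_n^{i_n}$ converging on $\mathbb{D}^n$ and equal to $f$ pointwise. The main obstacle is the Baire step: it demands genuine continuity of the coefficient functions $c_i$ (supplied by Phase~1) and a careful upgrade from the sub-polydisc uniform bound on the $c_i$ to a joint Cauchy-type convergence estimate giving joint, rather than merely iterated separate, analyticity of $f$.
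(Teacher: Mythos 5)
The paper does not prove this statement at all: it cites it as a direct consequence of Stawski's non-Archimedean Hartogs theorem in reference [4]. Your proposal is therefore attempting, from scratch, precisely the theorem being quoted, and the sketch runs into the genuine difficulties that make Hartogs-type results hard. The first gap is in Phase 1. Your argument that $c_i$ is separately analytic via the functions $h_i(a,b)=b^{-i}\bigl(f(a,b)-\sum_{j<i}c_j(a)b^j\bigr)$ is circular exactly where it matters: separate analyticity of $h_i$ in an $a_m$-direction along the hyperplane $\{b=0\}$ \emph{is} the analyticity of $c_i$ in $a_m$, which is what you are trying to prove. For $b\neq 0$ fixed, $h_i(\cdot,b)$ is indeed analytic in $a_m$, but $c_i(a)=\lim_{b\to 0}h_i(a,b)$ is only a pointwise limit of analytic functions, and such limits need not be analytic or even continuous. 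Already for $n=2$ there is no smaller case to induct from, so the continuity of the $c_i$ --- which your Baire step in Phase 2 needs in order to make $E_{\rho,M}$ closed --- is unestablished. This is the same obstruction that forces the classical complex proof through Osgood's lemma before the coefficient functions can be handled.

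The second gap is in passing from the Baire conclusion to the global statement. Baire's theorem hands you \emph{some} sub-polydisc $P'\subseteq P$ on which the bounds are uniform; it does not let you choose where $P'$ sits, so ``a coordinate translation'' does not give joint analyticity near every point --- rerunning Baire after translating again only produces a good sub-polydisc somewhere, possibly missing the point you care about. This propagation from a dense open set to all of $\mathbb{D}^n$ is the genuinely hard content of Hartogs' theorem (in the complex case it requires the Hartogs lemma on subharmonic functions), and it is absent from your sketch. Moreover, even if you had local power series representations near every point, the final patching step fails as stated in the non-Archimedean world: $\mathbb{D}^n$ is totally disconnected, so a function that is locally a convergent power series on a clopen cover need not be given by a single power series globally, and ``uniqueness of power series expansion'' gives no analytic continuation across disjoint clopen pieces. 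You would need each stage of the exhaustion to produce one power series on the entire closed polydisc, not merely local data. Given these obstacles, the honest options are either to cite Stawski's theorem, as the paper does, or to supply the Osgood-type and propagation arguments in full.
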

\begin{proof}
    This is a direct consequence of Stawski's theorem in \cite{4}.
\end{proof}
\begin{lemma}\label{lemma3_10}
    Let $K$ be a complete non-Archimedean field and let $\mathcal{O}_K$ be its valuation ring, then
    \begin{enumerate}
        \item[$\dot{\mathrm{I}}$)] the cardinality of $\mathcal{O}_K$ is at least continuum,
        \item[$\ddot{\mathrm{II}}$)] $\mathcal{O}_K$ is a Zariski dense subset of $K$.
    \end{enumerate}
\end{lemma}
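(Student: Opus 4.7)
The plan is to prove (I) by exhibiting an explicit injection $\{0,1\}^{\mathbb{N}} \hookrightarrow \mathcal{O}_K$ built from a uniformizer-like element, and then to derive (II) almost for free from (I) combined with the fact that proper Zariski-closed subsets of $\mathbb{A}^1_K$ are finite.

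For part (I), I would invoke the standing non-triviality of the absolute value to pick some $\pi \in K^{\times}$ with $0 < |\pi| < 1$ (replacing $\pi$ by $\pi^{-1}$ if necessary). Then for each binary sequence $\varepsilon = (\varepsilon_n)_{n=0}^{\infty} \in \{0,1\}^{\mathbb{N}}$, define $\Phi(\varepsilon) := \sum_{n=0}^{\infty} \varepsilon_n \pi^n$. The partial sums form a Cauchy sequence since the ultrametric inequality gives $|S_M - S_N| \leq |\pi|^{N+1} \to 0$, so completeness of $K$ produces a limit; and since every partial sum has absolute value at most $1$, this limit lies in $\mathcal{O}_K$. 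This defines a map $\Phi \colon \{0,1\}^{\mathbb{N}} \to \mathcal{O}_K$.

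The only real content is verifying that $\Phi$ is injective, which I would handle as follows. Given distinct $\varepsilon \neq \varepsilon'$, let $n_0$ be the smallest index at which they disagree; then the $n_0$-th term of $\Phi(\varepsilon) - \Phi(\varepsilon') = \sum_{n \geq n_0}(\varepsilon_n - \varepsilon_n')\pi^n$ has absolute value exactly $|\pi|^{n_0}$ (as $\varepsilon_{n_0} - \varepsilon_{n_0}' = \pm 1$), while every higher-indexed term has absolute value at most $|\pi|^{n_0+1} < |\pi|^{n_0}$. The strong triangle inequality with a strictly dominant summand then forces $|\Phi(\varepsilon) - \Phi(\varepsilon')| = |\pi|^{n_0} > 0$, so $\Phi$ is injective. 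Hence $|\mathcal{O}_K| \geq |\{0,1\}^{\mathbb{N}}| = 2^{\aleph_0}$. This is the step where characteristic $2$ might naively cause worry, but since $|{-1}| = |1| = 1$ regardless of characteristic, the argument is uniform.

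For part (II), I would use (I) to note that $\mathcal{O}_K$ is in particular infinite. Because every nonzero polynomial in $K[x]$ has only finitely many zeros, the proper Zariski-closed subsets of the affine line $\mathbb{A}^1_K$ are precisely the finite subsets of $K$; hence any infinite subset of $K$, and in particular $\mathcal{O}_K$, is Zariski dense. I do not expect any genuine obstacle in this argument; the only subtle point is the application of the non-Archimedean triangle equality in the injectivity step, which is precisely what prevents cancellation among the summands.
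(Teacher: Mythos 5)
Your proof is correct, but for part (I) it takes a genuinely different route from the paper. The paper argues abstractly: since the absolute value is (tacitly assumed) non-trivial, $\mathcal{O}_K$ has no isolated points, hence is a nonempty perfect complete metric space, and such a space has cardinality at least $2^{\aleph_0}$ by a standard result cited from Hewitt--Stromberg. You instead construct an explicit injection $\{0,1\}^{\mathbb{N}}\hookrightarrow\mathcal{O}_K$ via $\varepsilon\mapsto\sum_n\varepsilon_n\pi^n$ for a fixed $\pi$ with $0<|\pi|<1$, with injectivity secured by the ultrametric ``strictly dominant term'' principle; this is entirely self-contained, makes the role of non-triviality and completeness visible, and correctly dispatches the characteristic-$2$ worry since $|\pm 1|=1$. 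The paper's argument is shorter and applies verbatim to any perfect complete metric space, but outsources the cardinality bound to a reference; yours proves exactly what is needed from first principles. For part (II) both arguments coincide: infiniteness of $\mathcal{O}_K$ plus the fact that proper Zariski-closed subsets of the affine line are finite. No gaps.
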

\begin{proof}
    Since the metric of $\mathcal{O}_K$ is by assumption nontrivial, by the general theory of topological groups, we conclude that $\mathcal{O}_K$ admits no isolated points. Therefore, $\mathcal{O}_K$ is a perfect complete metrizable space, and hence is of at least continuum cardinality by Proposition 6.6.5 in \cite{5}.

    The second assertion is a direct consequence of $\dot{\mathrm{I}}$).
\end{proof}
\begin{theorem}\label{theorem3_11}
    Let $K$ be an algebraically closed complete non-Archimedean field and let $f\colon K^n \dashrightarrow K$ be a partial function defined on a nonempty Zariski open subset of $K^n$, if $f_{a_1, \dots, a_{i-1}}^{a_{i+1}, \dots, a_n}\colon K \dashrightarrow K$ is a rational function for every $\left(a_1, \dots, a_n\right) \in \operatorname{dom}f$ and $i \in\{1, \dots, n\}$, then $f$ is a rational function.
\end{theorem}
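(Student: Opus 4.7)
The strategy is to transport the problem from the Zariski category into the non-Archimedean analytic category, establish rationality of $f$ on a small polydisc via Theorem \ref{theorem3_9} and Proposition \ref{proposition3_8}, and then globalise the agreement via Proposition \ref{proposition3_3}. I would first pick any $p\in\operatorname{dom}f$. Since $\operatorname{dom}f$ is Zariski open in $K^n$ it is also open in the non-Archimedean topology, and since open polydiscs form a topological basis thereof, one can choose $r\in K^\times$ with $|r|$ sufficiently small that $p+r\mathbb{D}^n\subseteq\operatorname{dom}f$. After the affine change of variables $x\mapsto p+rx$, one may assume that $f$ is defined and separately rational on $\mathbb{D}^n$. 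For each tuple $(a_1,\dots,a_{i-1},a_{i+1},\dots,a_n)\in\mathbb{D}^{n-1}$ the one-variable restriction $f_{a_1,\dots,a_{i-1}}^{a_{i+1},\dots,a_n}$ is a rational function having no poles in $\mathbb{D}$, because $(a_1,\dots,a_{i-1},a,a_{i+1},\dots,a_n)\in\mathbb{D}^n\subseteq\operatorname{dom}f$ for every $a\in\mathbb{D}$; hence its Taylor series at $0$ converges throughout $\mathbb{D}$, and Theorem \ref{theorem3_9} supplies a power series $F\in K\{x_1,\dots,x_n\}$ convergent on $\mathbb{D}^n$ with $f|_{\mathbb{D}^n}\equiv F$.

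To invoke Proposition \ref{proposition3_8}, which operates in the Tate algebra $K\langle x_1,\dots,x_n\rangle$ over the closed unit polydisc $\mathfrak{o}_K^n$, I would pick $\varepsilon\in K^\times$ with $|\varepsilon|<1$ and set $G(x):=F(\varepsilon x_1,\dots,\varepsilon x_n)$, which now lies in $K\langle x_1,\dots,x_n\rangle$. Each one-variable restriction of $G$ at a point of $\mathfrak{o}_K^{n-1}$ is the composition of a one-variable restriction of $F$ (which is rational on $\mathbb{D}$) with the automorphism $x_i\mapsto\varepsilon x_i$, and therefore remains rational in $x_i$. Proposition \ref{proposition3_8} then yields $G\in K(x_1,\dots,x_n)$, whence $f$ agrees on the polydisc $p+r\varepsilon\mathfrak{o}_K^n$ (translated back to the original coordinates) with some rational function $R\in K(x_1,\dots,x_n)$.

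Finally, I would globalise this local identity through Proposition \ref{proposition3_3}. Taking $\Lambda_i$ to be the $i$-th coordinate projection of $p+r\varepsilon\mathfrak{o}_K^n$, which is an open disc in $K$ of cardinality at least the continuum by Lemma \ref{lemma3_10}, each $\Lambda_i$ is infinite and hence Zariski dense in the affine line $K$, since proper Zariski closed subsets of $K$ are finite. As $f=R$ on $(\Lambda_1\times\dots\times\Lambda_n)\cap\operatorname{dom}f\cap\operatorname{dom}R$ by the previous step, Proposition \ref{proposition3_3} promotes this to $f\equiv R$ on all of $\operatorname{dom}f$, completing the proof. I expect the main technical hurdle to be the passage from Theorem \ref{theorem3_9}, which produces a series convergent on the open polydisc $\mathbb{D}^n$, to Proposition \ref{proposition3_8}, which demands a restricted power series on the closed polydisc $\mathfrak{o}_K^n$; the rescaling by $\varepsilon$ is the device that bridges this gap.
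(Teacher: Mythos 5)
Your proposal is correct and follows essentially the same route as the paper: apply Theorem \ref{theorem3_9} on a polydisc inside $\operatorname{dom}f$, feed the resulting series into Proposition \ref{proposition3_8} to get rationality, and globalise via Proposition \ref{proposition3_3} using the Zariski density of the (at least continuum-sized) polydisc factors. Your explicit $\varepsilon$-rescaling from the open polydisc $\mathbb{D}^n$ to the closed polydisc $\mathfrak{o}_K^n$ is a welcome clarification of a step the paper compresses into the phrase ``up to a homothety''.
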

\begin{proof}
    Denote by $\mathcal{O}_K$ the valuation ring of $K$ and denote by $K\left[x_1, \dots, x_n\right]$ the ring of polynomials in $n$ variables over $K$. Up to a homothety we can assume w.l.o.g. that $\mathcal{O}_K^{\oplus n} \subseteq \operatorname{dom}f$. By Theorem \ref{theorem3_9} there exists $\sum_{i_1=0}^{\infty} \dots \sum_{i_n=0}^{\infty} c_{i_1 \dots i_n} x_1^{i_1} \dots x_n^{i_n} \in K\left\langle x_1, \dots, x_n\right\rangle$ such that $f\left(a_1, \dots, a_n\right)=\sum_{i_1=0}^{\infty} \dots \sum_{i_n=0}^{\infty} c_{i_1\dots i_n} a_1^{i_1} \dots a_n^{i_n}$ for all $a_1, \dots, a_n \in \mathcal{O}_K$. By Proposition \ref{proposition3_8} and the assumption on $f$, we obtain that $\sum_{i_1=0}^{\infty} \dots \sum_{i_n=0}^{\infty} c_{i_1 \dots i_n} x_1^{i_1}$ $\dots x_n^{i_n} \in K\left(x_1, \dots, x_n\right)$. By Lemma \ref{lemma3_10} and Proposition \ref{proposition3_3} we conclude that $f$ is a rational function.
\end{proof}
\begin{corollary}\label{corollary3_12}
    Let $k$ be an algebraically closed field of continuum cardinality and $f\colon k^n \dashrightarrow k$ be a partial function defined on a nonempty Zariski open subset of $k^n$, if $f_{a_1, \dots, a_{i-1}}^{a_{i+1}, \dots, a_n}\colon k \dashrightarrow k$ is a rational function for every $\left(a_1, \dots, a_n\right) \in  \operatorname{dom}f$ and $i \in\{1, \dots, n\}$, then $f$ is a rational function.
\end{corollary}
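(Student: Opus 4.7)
The plan is to reduce Corollary \ref{corollary3_12} directly to Theorem \ref{theorem3_11} by identifying $k$ with an algebraically closed complete non-Archimedean field of the same cardinality via Steinitz's classification. Recall that Steinitz's theorem asserts that an algebraically closed field is determined up to isomorphism by its characteristic and its transcendence degree over its prime field; since $k$ is algebraically closed and of cardinality continuum, that transcendence degree is itself the continuum.

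I would then exhibit, for each possible characteristic, an algebraically closed complete non-Archimedean field $K$ of cardinality continuum. In characteristic $0$ the field $\mathbb{C}_p$, i.e.\ the completion of $\overline{\mathbb{Q}_p}$, works; in characteristic $p>0$ the analogous construction obtained by completing the algebraic closure of $\mathbb{F}_p(\!(t)\!)$ with respect to the extension of the $t$-adic absolute value works. Both fields are classical, carry a non-trivial absolute value (as required by the standing convention of Section 3), and have cardinality continuum. Applying Steinitz's theorem produces a field isomorphism $\sigma\colon k \to K$.

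Next I would transport $f$ to a partial function $\tilde{f}\colon K^n \dashrightarrow K$ by setting $\operatorname{dom}\tilde{f}:=\sigma^{\times n}(\operatorname{dom}f)$, which remains Zariski open because $\sigma$ sends the polynomials defining $k^n\setminus\operatorname{dom}f$ to polynomials cutting out its image, and $\tilde{f}(\sigma(a_1),\dots,\sigma(a_n)):=\sigma(f(a_1,\dots,a_n))$. The separate-rationality hypothesis transports to $\tilde{f}$ because rationality of a one-variable partial function is a purely algebraic property (existence of polynomials $p,q$ over the base field with $q\cdot f = p$ on the generic part of the domain) preserved by any field isomorphism. Theorem \ref{theorem3_11} then supplies a rational function $\tilde{g}$ on $K^n$ agreeing with $\tilde{f}$ on its domain, and pulling $\tilde{g}$ back through $\sigma^{-1}$ produces the required rational function on $k^n$ agreeing with $f$.

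The main obstacle I anticipate is simply verifying the cardinality claim for the non-Archimedean candidate in both characteristics, but this is standard --- indeed Lemma \ref{lemma3_10} already shows that any such field has cardinality at least continuum, and a direct estimate on Cauchy-sequence representatives shows each of the explicit candidates above has cardinality exactly continuum. Everything else in the argument is a transparent transfer of hypotheses and conclusions across the isomorphism $\sigma$.
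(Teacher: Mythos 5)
Your proposal is correct and follows essentially the same route as the paper: the paper's proof of Corollary \ref{corollary3_12} likewise reduces to Theorem \ref{theorem3_11} by invoking the classification of algebraically closed fields (it cites ``the model theory of algebraically closed fields,'' which is Steinitz's theorem in the guise of uncountable categoricity) to identify $k$ with $\mathbb{C}_p$ in characteristic $0$ or with the completion of an algebraic closure of $\mathbb{F}_p(\!(t)\!)$ in characteristic $p$. Your write-up merely makes explicit the cardinality check and the transport of the hypotheses across the isomorphism, which the paper leaves implicit.
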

\begin{proof}
    By Theorem \ref{theorem3_11} it suffices to prove that $k$ admits a complete non-Archimedean absolute value. Indeed, by the model theory of algebraically closed fields, if chark $=p>0$ then $k$ is isomorphic to the completion of an algebraic closure of the local field $\mathbb{F}_p((t))$, and if char $k=0$ then $k$ is isomorphic to the $p$-adic Tate field $\mathbb{C}_p$.
\end{proof}

\section{Proof of the theorem in the case of cardinality beyond continuum}
In this section we prove that the assertion in Theorem \ref{theorem1_4} holds for algebraically closed fields of cardinality exceeding continuum.

For any algebraically closed field $k$ and rational function $f\colon k \dashrightarrow k$, as usual we denote by $\operatorname{deg}f$ the mapping degree of $f$ and denote by $\operatorname{ord}_{\infty}f$ the order of $f$ at infinity.
\begin{lemma}\label{lemma4_1}
    Let $X$ be an algebraic subset of the $n$-dimensional affine space over an algebraically closed field $k$ and let $K$ be an algebraically closed subfield of $k$, then $X \bigcap K^n$ is a Zariski dense subset of $X$.
\end{lemma}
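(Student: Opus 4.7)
The plan is to reduce to a standard Nullstellensatz argument. The statement as written needs $X$ to be cut out by polynomials in $K[x_1,\dots,x_n]$ in order to hold (take $X=\{\alpha\}\subseteq k$ for any $\alpha\in k\setminus K$ to see the need), so I read it with that implicit hypothesis and pick defining polynomials $f_1,\dots,f_r\in K[x_1,\dots,x_n]$ with $X=V_k(f_1,\dots,f_r)$; then $X\cap K^n=V_K(f_1,\dots,f_r)$.

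Suppose towards contradiction that $X\cap K^n$ is not Zariski dense in $X$. Then there exists some $g\in k[x_1,\dots,x_n]$ which vanishes on $X\cap K^n$ but does not vanish identically on $X$. I would choose a $K$-linearly independent family $(e_\alpha)_\alpha$ in $k$ spanning the finite-dimensional $K$-subspace of $k$ generated by the coefficients of $g$, and split $g=\sum_\alpha e_\alpha g_\alpha$ with $g_\alpha\in K[x_1,\dots,x_n]$. For any $a\in X\cap K^n$ we have $g_\alpha(a)\in K$ for each $\alpha$, so the identity $\sum_\alpha e_\alpha g_\alpha(a)=0$ combined with $K$-linear independence of the $e_\alpha$ forces $g_\alpha(a)=0$ for every $\alpha$.

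Hence each $g_\alpha\in K[x_1,\dots,x_n]$ vanishes identically on $V_K(f_1,\dots,f_r)$. The Nullstellensatz over the algebraically closed field $K$ yields $g_\alpha\in\sqrt{(f_1,\dots,f_r)}$ inside $K[x_1,\dots,x_n]$, and this radical membership is preserved under the flat extension $K[x_1,\dots,x_n]\hookrightarrow k[x_1,\dots,x_n]$. Therefore some power of $g_\alpha$ lies in $(f_1,\dots,f_r)\cdot k[x_1,\dots,x_n]$, so $g_\alpha$ vanishes on $V_k(f_1,\dots,f_r)=X$. Summing over $\alpha$, $g=\sum_\alpha e_\alpha g_\alpha$ vanishes on $X$, contradicting the choice of $g$.

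The only real obstacle is the implicit $K$-definability hypothesis on $X$; once that is granted, the proof is just coefficient-splitting against a $K$-basis of $k$ together with the Nullstellensatz over $K$. No input specific to the previously established material in the paper is needed.
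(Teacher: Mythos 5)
Your proof is correct under the hypothesis you add, and it takes a genuinely different route from the paper's. Your preliminary observation is also right and worth keeping: as literally stated the lemma is false (your example $X=\{\alpha\}$ with $\alpha\in k\setminus K$ already kills it, and so does the line $y=\alpha x$ if one wants an irreducible positive-dimensional example), so $X$ must be assumed to be cut out by polynomials over $K$. The paper's own proof silently uses this too: the asserted identity $f^{-1}(K^m)=X\bigcap K^n$ requires both the Noether projection $T$ and $X$ itself to be defined over $K$ (otherwise a fibre over a $K$-point need not consist of $K$-points). The amendment is harmless for the one application in Proposition \ref{proposition4_3}, since the L\"owenheim--Skolem subfield can be chosen to contain the finitely many coefficients of defining equations of $X$ without changing its cardinality.

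As for the argument itself: the paper projects $X$ finitely and surjectively onto $k^m$ via Noether normalization, uses that finite surjections onto a normal target are open, and pulls back the density of $K^m$ in $k^m$; your proof instead splits a putative witness $g$ of non-density along a $K$-basis of the $K$-span of its coefficients and applies the Nullstellensatz over $K$ to each $K$-rational piece. Your version is more elementary and self-contained --- it needs no openness or normality input and works uniformly for reducible $X$ --- at the cost of being purely a density statement in the ideal-theoretic sense rather than exhibiting any geometric structure. One small simplification: you do not need flatness of $K[x_1,\dots,x_n]\hookrightarrow k[x_1,\dots,x_n]$ anywhere; the inclusion $g_\alpha^N\in(f_1,\dots,f_r)K[x_1,\dots,x_n]\subseteq(f_1,\dots,f_r)k[x_1,\dots,x_n]$ is immediate, and that is all your argument uses.
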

\begin{proof}
    Denote $m:=\operatorname{dim}_k X$ then by Noether's normalization lemma there exists a $k$-linear transform $T\colon k^n \longrightarrow k^m$ such that $f:=\left.T\right|_X$ is a surjective finite morphism. In particular, the $k$-linearity of $T$ yields $f^{-1}\left(K^m\right)=X \bigcap K^n$. Since $K$ is infinite as it is algebraically closed, and the Zariski topology on the affine line $k^1$ is cofinite, we obtain that $K^m$ is a Zariski dense subset of $k^m$.

    Since $k^m$ is normal, we have that $\phi$ is an open mapping. Take any nonempty Zariski open subset $U$ of $X$, then $f(U) \bigcap K^m \neq \varnothing$. Since $f$ is surjective, we have $U \bigcap\left(X \bigcap K^n\right)=f^{-1}\left(f(U) \bigcap K^m\right) \neq \varnothing$.
\end{proof}
\begin{proposition}\label{proposition4_2}
    Let $X$ be a variety over an uncountable algebraically closed field $k$ and let $\left(X_i\right)_{i=0}^{\infty}$ be a sequence of Zariski closed subsets of $X$, if $X \neq X_i$ for all $i \in \mathbb{N}$ then $X \neq \bigcup_{i=0}^{\infty} X_i$.
\end{proposition}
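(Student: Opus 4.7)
The plan is to reduce to the case of affine space $\mathbb{A}^m$ with $m=\dim X$ via Noether normalization, and then to prove by induction on $m$ that affine $m$-space over an uncountable algebraically closed field cannot be written as a countable union of proper Zariski closed subsets. Implicit in the statement is that $X$ is irreducible (as is standard for FAC varieties); otherwise the decomposition of $X$ into irreducible components would yield an immediate counterexample.

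First I would pick a nonempty affine open subset $U\subseteq X$. For each $i$ the intersection $X_i\cap U$ is a proper closed subset of $U$: if $X_i\cap U=U$ then $U\subseteq X_i$, and taking closures in the irreducible space $X$ would give $X=\overline{U}\subseteq X_i$, contradicting $X_i\neq X$. Hence it suffices to show $U\neq\bigcup_{i=0}^{\infty}(X_i\cap U)$. By Noether normalization there is a finite surjective morphism $\pi\colon U\to\mathbb{A}^m$; since finite morphisms are closed and preserve dimension, each $\pi(X_i\cap U)$ is a closed subset of $\mathbb{A}^m$ of dimension strictly less than $m$, hence a proper closed subset. If $U$ were the union of the $X_i\cap U$, then $\mathbb{A}^m=\pi(U)$ would be the union of these proper closed subsets, so the problem reduces to showing $\mathbb{A}^m$ is not a countable union of proper closed subsets.

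Since every proper closed subset of $\mathbb{A}^m$ is contained in some hypersurface $V(f)$ with $0\neq f\in k[x_1,\dots,x_m]$, it is enough to construct, for any sequence $(f_i)_{i=0}^{\infty}$ of nonzero polynomials, a point of $\mathbb{A}^m$ avoiding every $V(f_i)$. I would induct on $m$. For $m=1$, each $V(f_i)$ is finite, so $\bigcup_i V(f_i)$ is countable, while $k$ is uncountable, so a point outside the union exists. For $m\geq 2$, write each $f_i$ as a polynomial in $x_m$ with coefficients in $k[x_1,\dots,x_{m-1}]$ and pick any nonzero coefficient $g_i$. By the inductive hypothesis there exists $a\in\mathbb{A}^{m-1}$ with $g_i(a)\neq 0$ for all $i$, whence each $f_i(a,x_m)$ is a nonzero univariate polynomial with only finitely many roots. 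The set $\bigcup_i\bigl(V(f_i)\cap(\{a\}\times k)\bigr)$ is therefore countable, whereas $\{a\}\times k$ is uncountable, yielding $b\in k$ such that $(a,b)$ avoids every $V(f_i)$.

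The only real care needed is at the reduction step, namely verifying that properness is preserved under the push-forward by $\pi$, which is where Noether normalization and the irreducibility of $X$ enter. Once inside $\mathbb{A}^m$, the induction proceeds using only the uncountability of $k$ together with the fact that nonzero univariate polynomials have finitely many roots, so I do not expect any genuine obstacle.
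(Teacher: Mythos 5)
Your proposal is correct, and its skeleton coincides with the paper's: reduce to affine $m$-space via an affine chart and Noether normalization, then induct on $m$, with the base case handled by comparing the countable union of finite sets against the uncountable field. The two arguments part ways only in the inductive step. The paper slices by a hyperplane: since each proper closed $X_i$ contains at most finitely many hyperplanes while the Grassmannian $\operatorname{Gr}(n-1,k^n)\cong\mathbb{P}_{n-1}(k)$ is uncountable, a pigeonhole argument produces a hyperplane $H$ with $H\cap X_i\neq H$ for all $i$, and the induction hypothesis applied to $H\cong k^{n-1}$ finishes. You instead pass from closed sets to hypersurfaces $V(f_i)$, extract a nonzero coefficient $g_i\in k[x_1,\dots,x_{m-1}]$ of each $f_i$ viewed as a polynomial in $x_m$, use the induction hypothesis to find $a$ with all $g_i(a)\neq 0$, and then slice by the vertical line $\{a\}\times k$, on which each $f_i(a,x_m)$ has only finitely many roots. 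Your version is slightly more elementary (no Grassmannian, no pigeonhole over an uncountable index set) at the cost of the extra step of replacing closed sets by hypersurfaces and tracking coefficients; the paper's version is more geometric and avoids choosing equations. Both reductions to $\mathbb{A}^m$ are handled identically and correctly, including the point that properness of $X_i\cap U$ in $U$ follows from irreducibility of $X$; the only cosmetic omission in your write-up is the trivial case $m=0$, which the paper notes explicitly.
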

\begin{proof}
    By passing to an affine patch of $X$ and applying Noether's normalization lemma we can assume w.l.o.g. that $X=k^n$. Apply induction on $n \in \mathbb{N}$:
    
    If $n=0$ then the statement holds trivially. For $n=1$ we have that $X_i$ is finite for all $i \in \mathbb{N}$ and hence $\bigcup_{i=0}^{\infty} X_i$ is countable, while by assumption $k^1$ is not. 

    Suppose that $n \geqslant 2$. Since the Grassmannian $\operatorname{Gr}\left(n-1, k^n\right) \cong \mathbb{P}_{n-1}(k)$ is uncontable, by Dirichlet's Schubfachprinzip there exists a hyperplane $H$ in $k^n$ such that $H \neq X_i \bigcap H$ for all $i \in \mathbb{N}$. By the induction hypothesis we have $H \neq \bigcup_{i=0}^{\infty} X_i \bigcap H$. Therefore in particular $k^n \neq \bigcup_{i=0}^{\infty} X_i$.
\end{proof}
\begin{proposition}\label{proposition4_3}
    Let $X$ be a variety over an algebraically closed field $k$ of at least continuum cardinality and let $\left(\Lambda_i\right)_{i=0}^{\infty}$ be a sequence of subsets of $X$, if $X=\bigcup_{i=0}^{\infty} \Lambda_i$, then there exists an integer $n \geqslant 0$ and a Zariski dense subset $\Lambda$ of $X$, such that $\Lambda \subseteq \Lambda_n$ and $\Lambda$ is of at most continuum cardinality.
\end{proposition}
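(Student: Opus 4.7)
The plan is to combine Proposition \ref{proposition4_2} with a separability-type property of the Zariski topology that stems from the coordinate ring of an affine variety being countable-dimensional over $k$. First I would replace each $\Lambda_i$ by its Zariski closure $\overline{\Lambda_i}$ in $X$. Since $X = \bigcup_{i=0}^{\infty}\overline{\Lambda_i}$ and $k$ is uncountable (as $|k| \geq \mathfrak{c}$), Proposition \ref{proposition4_2} supplies an integer $n$ with $\overline{\Lambda_n}=X$; in other words, $\Lambda_n$ is Zariski dense in $X$. It therefore suffices to extract a Zariski dense subset $\Lambda \subseteq \Lambda_n$ of cardinality at most $\aleph_0 \leq \mathfrak{c}$.

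Next I would pass to an affine situation by choosing any nonempty affine open $U \subseteq X$. Because $X$ is irreducible, $\Lambda_n \cap U$ is Zariski dense in $U$, and any Zariski dense subset of $U$ is automatically Zariski dense in $X$. The coordinate ring $\mathcal{O}(U)$, being a quotient of a polynomial ring over $k$, has countable dimension as a $k$-vector space; fix a $k$-basis $(e_m)_{m \geq 1}$. For each $m \geq 1$, the regular function on $U^m$ defined by $(q_1,\dots,q_m) \mapsto \det[e_i(q_j)]_{i,j=1}^{m}$ is not identically zero --- a short induction on $m$ via cofactor expansion along the last column, using $k$-linear independence of $e_1,\dots,e_m$. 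Its nonvanishing locus is therefore a nonempty Zariski open subset of the irreducible variety $U^m$; since $(\Lambda_n \cap U)^m$ is Zariski dense in $U^m$, I can choose $p_1^{(m)},\dots,p_m^{(m)} \in \Lambda_n \cap U$ at which this determinant is nonzero. Setting $\Lambda := \{\, p_j^{(m)} : m \geq 1,\ 1 \leq j \leq m \,\}\subseteq \Lambda_n$ yields a countable candidate.

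To verify Zariski density of $\Lambda$ it remains only to run one linear-algebra step: for any nonzero $f = \sum_{i=1}^{m} c_i e_i \in \mathcal{O}(U)$, the column $[f(p_j^{(m)})]_{j=1}^{m}$ equals $[e_i(p_j^{(m)})]_{i,j}^{T}\,[c_i]_{i=1}^{m}$, and invertibility of the matrix together with $[c_i]\neq 0$ forces some $f(p_j^{(m)})$ to be nonzero, so no nonzero regular function on $U$ vanishes on $\Lambda$. The main obstacle I anticipate is conceptual rather than computational: one has to recognize that Zariski density of a subset is equivalent to injectivity of the evaluation map on the coordinate ring and that such injectivity on a countable-dimensional $k$-vector space can be witnessed by countably many carefully chosen test points, the determinant construction being the natural mechanism for manufacturing those witnesses out of the density of $\Lambda_n$.
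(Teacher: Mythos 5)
Your argument is correct, and it diverges from the paper's proof in the half of the statement that controls the cardinality of $\Lambda$. Both proofs use Proposition \ref{proposition4_2} to locate an index $n$ with $\Lambda_n$ Zariski dense. The paper then makes $\Lambda_n$ itself small: it invokes the downward L\"owenheim--Skolem theorem to produce an algebraically closed subfield $K$ of continuum cardinality, uses Lemma \ref{lemma4_1} to see that the $K$-points are Zariski dense, and thereby reduces to the case $|k|=$ continuum, where $|\Lambda_n|\leqslant|k^n|$ is automatic. You instead stay over the original field and extract a \emph{countable} dense subset of $\Lambda_n$ by hand, exploiting the fact that $\mathcal{O}(U)$ is a countable-dimensional $k$-vector space and manufacturing, for each $m$, an $m$-tuple of points of $\Lambda_n\cap U$ at which the Gram-type determinant $\det[e_i(q_j)]$ is nonzero. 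This buys a strictly stronger conclusion ($\Lambda$ countable rather than of continuum cardinality) from a weaker hypothesis (only uncountability of $k$ is used, via Proposition \ref{proposition4_2}), and it avoids the model-theoretic input entirely. The debts you leave are all standard and correctly identified: that evaluation at points is injective on $\mathcal{O}(U)$ for a reduced variety over an algebraically closed field (so linear independence in $\mathcal{O}(U)$ is linear independence of functions, which drives the cofactor induction), that a finite product of Zariski dense subsets is Zariski dense in the product variety (needed for $(\Lambda_n\cap U)^m$ dense in $U^m$), and that density of a subset of an affine variety is equivalent to no nonzero regular function vanishing on it. Each is routine, so the proof is complete as sketched.
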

\begin{proof}
    By passing to a Zariski dense open subset of $X$, we can assume w.l.o.g. that $X$ is a subvariety of affine space $k^n$. By Lowenheim-Skolem theorem downward, there exists an algebraically closed subfield $K$ of $k$, such that $K$ is of continuum cardinality. By virture of Lemma \ref{lemma4_1}, we can assume w.l.o.g. that the cardinality of $k$ is continuum. Since $k$ is uncountable and algebraically closed, by Proposition \ref{proposition4_2} there exists $n \in \mathbb{N}$ such that $\Lambda_n$ is not Zariski nowhere-dense in $X$. Since $X$ is irreducible, we obtain that $\Lambda_n$ is a Zariski dense subset of $X$. Since $\Lambda_n \subseteq X \subseteq k^n$, and $k^n$ is equinumerous to $k$, we conclude that the cardinality of $\Lambda_n$ is at most continuum.
\end{proof}
\begin{lemma}\label{lemma4_4}
    Let $a, a_0, \dots, a_l$ be $l+2$ elements of an algebraically closed field $k$ and $P, Q: k \rightarrow k$ be polynomial functions, if $\operatorname{deg} P=n$ and $\operatorname{deg} Q=m=l-n$, then the determinant
    $$
    \Delta=\left|\begin{array}{cccccccc}
        1 & a & \dots & a^m & 0 & 0 & \dots & 0 \\ 
        P\left(a_0\right) & P\left(a_0\right) a_0 & \dots & P\left(a_0\right) a_0^m & Q\left(a_0\right) & Q\left(a_0\right) a_0 & \dots & Q\left(a_0\right) a_0^n \\ 
        \vdots & \vdots & \ddots & \vdots & \vdots & \vdots & \ddots & \vdots \\ 
        P\left(a_l\right) & P\left(a_l\right) a_l & \dots & P\left(a_l\right) a_l^m & Q\left(a_l\right) & Q\left(a_l\right) a_l & \dots & Q\left(a_l\right) a_l^n    
    \end{array}\right|
    $$
    is equal to $Q(a) \cdot \operatorname{res}(P, Q) \prod_{i=0}^{l-1} \prod_{j=i+1}^{l}\left(a_j-a_i\right)$.
\end{lemma}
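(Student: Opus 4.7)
The plan is to reinterpret $\Delta$ as the determinant of an explicit linear map and then factor off the three advertised pieces one at a time. Concretely, consider the linear map $\Phi\colon k[x]_{\leq m}\oplus k[x]_{\leq n}\to k^{l+2}$ defined by $(u,v)\mapsto(u(a),(uP+vQ)(a_0),\dots,(uP+vQ)(a_l))$. Writing out the images of the monomial basis vectors $(x^j,0)$ and $(0,x^j)$ against the standard basis of $k^{l+2}$ recovers exactly the matrix in the statement, so that $\Delta=\det\Phi$ in these bases.

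To extract the Vandermonde factor I would change the target basis, identifying $k^{l+2}$ with $k\oplus k[x]_{\leq l}$ by reading off the last $l+1$ entries as the values at $a_0,\dots,a_l$ of the unique interpolating polynomial of degree at most $l$. The associated change-of-basis matrix is block-diagonal with blocks $1$ and the Vandermonde matrix $(a_i^j)_{i,j=0}^l$, whose determinant is $\prod_{0\leq i<j\leq l}(a_j-a_i)$. This yields $\Delta=\prod_{i<j}(a_j-a_i)\cdot\Delta'$, where $\Delta'$ is the determinant of the map $\Phi'\colon(u,v)\mapsto(u(a),\,uP+vQ)\in k\oplus k[x]_{\leq l}$ in monomial bases; it then remains to prove $\Delta'=Q(a)\cdot\operatorname{res}(P,Q)$.

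Regard $\Delta'$ as a polynomial in $a$ and the coefficients of $P$ and $Q$. The key observation is that $\Phi'$ fails to be injective exactly when $Q(a)\cdot\operatorname{res}(P,Q)=0$. Indeed, for $(u,v)\in\ker\Phi'$ we have $uP+vQ=0$; when $\gcd(P,Q)$ is a unit, the degree bounds force $(u,v)=c\cdot(Q,-P)$ for some scalar $c$, and the extra condition $u(a)=0$ collapses to $cQ(a)=0$, so a nonzero kernel element exists iff $Q(a)=0$. When $\gcd(P,Q)$ has positive degree (equivalently $\operatorname{res}(P,Q)=0$), the kernel of $(u,v)\mapsto uP+vQ$ already has dimension at least two inside $k[x]_{\leq m}\oplus k[x]_{\leq n}$, so the single linear condition $u(a)=0$ cannot cut it down to zero. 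Since $Q(a)$ and $\operatorname{res}(P,Q)$ are coprime irreducible elements of $k[a,p_0,\dots,p_n,q_0,\dots,q_m]$, we conclude that their product divides $\Delta'$.

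To fix the remaining scalar I would compare total degrees: a direct inspection of the matrix of $\Phi'$ shows $\deg_a\Delta'\leq m$, while $\Delta'$ has total degree at most $m$ in the $p_i$ and at most $n+1$ in the $q_j$, matching exactly the corresponding degrees of $Q(a)\operatorname{res}(P,Q)$; therefore $\Delta'/(Q(a)\operatorname{res}(P,Q))$ is a scalar $c\in k$. To compute $c$, specialize to $P=1$, $Q=x^l$ (so $n=0$, $m=l$): the matrix of $\Phi'$ is then easily reduced to give $\Delta'=a^l$, while $Q(a)\operatorname{res}(P,Q)=a^l$ as well, so $c=1$. Combining with the Vandermonde factor completes the proof. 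The subtlest step is the third: one must use the \emph{irreducibility} of the classical resultant as a polynomial in the coefficients of $P$ and $Q$ in order to pass from the set-theoretic vanishing of $\Delta'$ on $\{\operatorname{res}(P,Q)=0\}$ to the polynomial divisibility $\operatorname{res}(P,Q)\mid\Delta'$.
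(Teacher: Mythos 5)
Your first two steps are sound and your overall strategy --- vanishing locus, irreducibility of the resultant, degree count, and a normalizing specialization --- is a genuinely different route from the paper's, which instead performs column operations to replace the $P$-block entries by $P(a_i)(a_i-a)a_i^j$ and then recognizes the resulting $(l+1)\times(l+1)$ matrix as the product of the Vandermonde matrix with the Sylvester matrix of $(x-a)P$ and $Q$, finishing with the multiplicativity $\operatorname{res}((x-a)P,Q)=Q(a)\operatorname{res}(P,Q)$. The paper's identification is direct and needs no irreducibility input; yours trades that for the classical (nontrivial) irreducibility of the resultant plus a normalization. Your third step is essentially correct for $n,m\geq 1$, and the degree bookkeeping in step four checks out: every nonzero term of $\Delta'$ contains exactly $m$ of the $p_i$, exactly $n+1$ of the $q_j$, and a power $a^j$ with $j\leq m$.

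The genuine gap is the final normalization. The undetermined scalar $c$ depends on $(n,m)$, and your specialization $P=1$, $Q=x^l$ lives at $(n,m)=(0,l)$: the polynomial $x^l=x^{n+m}$ is not expressible in the coefficient variables $q_0,\dots,q_m$ unless $n=0$, so this evaluation simply does not take place inside the parameter space $k[a,p_0,\dots,p_n,q_0,\dots,q_m]$ for the given degrees. A legitimate evaluation (e.g.\ $P=x^n$, $Q=1+x^m$, or a direct count of the sign of the unique permutation surviving in $\det\Phi'$ after the unipotent source change of basis $x^j\mapsto (x-a)x^{j-1}$) gives $c=(-1)^{m(n+1)}$ under the standard Sylvester-matrix convention, not $c=1$; for instance with $n=0$, $m=1$, $P=1$, $Q=x$, $a=1$, $(a_0,a_1)=(2,3)$ one computes $\Delta=-1$ while $Q(a)\operatorname{res}(P,Q)(a_1-a_0)=+1$. (This sign ambiguity is harmless for Proposition 4.5 and Theorem 4.6, where only the ratio $\alpha/\beta$ up to a sign depending on $(n,m)$ is used, and the paper's own "by observation'' factorization is equally cavalier about column ordering --- but your proof as written asserts $c=1$ on invalid grounds.) Separately, your step three breaks down in the edge cases: if $m=0$ then $\operatorname{res}(P,Q)=q_0^{\,n}$ and $Q(a)=q_0$ are neither irreducible (for $n\geq 2$) nor coprime, and if $n=0$ then $\operatorname{res}(P,Q)=p_0^{\,m}$ is a prime power, so set-theoretic vanishing only yields $p_0\mid\Delta'$; these cases need the separate direct computation that the paper carries out for $m=0$.
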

\begin{proof}
    Define the following matrices
    $$
    \begin{aligned}
        A=&\left[\begin{array}{cccc}
        Q\left(a_0\right) & Q\left(a_0\right) a_0 & \dots & Q\left(a_0\right) a_0^n \\
        Q\left(a_1\right) & Q\left(a_1\right) a_1 & \dots & Q\left(a_1\right) a_1^n \\
        \vdots & \vdots & \ddots & \vdots \\
        Q\left(a_n\right) & Q\left(a_n\right) a_n & \dots & Q\left(a_n\right) a_n^n
        \end{array}\right]\\ 
        B=&\left[\begin{array}{cccccccc} 
            P\left(a_0\right)\left(a_0-a\right) & P\left(a_0\right)\left(a_0-a\right) a_0 & \dots &P\left(a_0\right)\left(a_0-a\right) a_0^{m-1} & Q\left(a_0\right) & Q\left(a_0\right)a_0 & \dots & Q\left(a_0\right) a_0^n \\ 
            P\left(a_1\right)\left(a_1-a\right) & P\left(a_1\right)\left(a_1-a\right) a_1 & \dots &P\left(a_1\right)\left(a_1-a\right) a_1^{m-1} & Q\left(a_1\right) & Q\left(a_1\right)a_1 & \dots & Q\left(a_1\right) a_1^n \\ 
            \vdots & \vdots & \ddots & \vdots & \vdots & \vdots & \ddots & \vdots \\ 
            P\left(a_l\right)\left(a_l-a\right) & P\left(a_l\right)\left(a_l-a\right) a_l & \dots & P\left(a_l\right)\left(a_l-a\right) a_l^{m-1} & Q\left(a_l\right) & Q\left(a_l\right)a_l & \dots & Q\left(a_l\right) a_l^n
        \end{array}\right]
    \end{aligned}
    $$

    If $m=0$ i.e. $Q$ is a constant function, then $\Delta=\operatorname{det} A=Q\left(a_0\right)^{n+1} \prod_{i=0}^{n-1} \prod_{j=i+1}^n\left(a_j-a_i\right)=Q(a) \cdot \operatorname{res}(P, Q) \prod_{i=0}^{l-1} \prod_{j=i+1}^l\left(a_j-a_i\right)$. Suppose that $m \geqslant 1$, then by applying elementary column transformations, we obtain that $\Delta= \operatorname{det}B$. Define linear polynomial function $L_a: k \rightarrow k$ by $L_a(b)=b-a$, then by observation $B$ is the product of the Vandermonde matrix $(a_j^i)_{j=0,\dots,l}^{i=0,\dots,l}$ with the Sylvester matrix of $L_a P$ and $Q$. By the universal property of the resultant, we have $\operatorname{res}(L_aP, Q)=\operatorname{res}(P, Q) \operatorname{res}\left(L_a, Q\right)=Q(a) \cdot \operatorname{res}(P, Q)$. Therefore we conclude that $\operatorname{det} B=Q(a) \cdot \operatorname{res}(P, Q) \prod_{i=0}^{l-1} \prod_{j=i+1}^l\left(a_j-a_i\right)$.
\end{proof}
\begin{proposition}\label{proposition4_5}
    Let $k$ be an algebraically closed field and $f\colon k \dashrightarrow k$ be a nonzero rational function, then the following properties hold:
    \begin{enumerate}
        \item[$\mathrm{I}$)] $n:=\operatorname{deg} f+\min \left\{0, \operatorname{ord}_{\infty} f\right\}$ and $m:=\operatorname{deg} f-\max \left\{0, \operatorname{ord}_{\infty} f\right\}$ are non-negative integers,
        \item[$\mathrm{II}$)] for any $l+2$ elements $a, a_0, \dots, a_l$ of $\operatorname{dom}f$, if $a_0, \dots, a_l$ are pairwise distinct and $l=n+m$, then the determinants
        $$
        \begin{aligned}
            \alpha=&\left|\begin{array}{cccccccc}
                1 & a & \dots & a^n & 0 & 0 & \dots & 0 \\ 
                1 & a_0 & \dots & a_0^n & f\left(a_0\right) & f\left(a_0\right)a_0 & \dots & f\left(a_0\right) a_0^m\\ 
                \vdots & \vdots & \ddots & \vdots & \vdots & \vdots & \ddots & \vdots \\ 
                1 & a_l & \dots & a_l^n & f\left(a_l\right) & f\left(a_l\right)a_l & \dots & f\left(a_l\right) a_l^m
            \end{array}\right|\\ 
            \beta=&\left|\begin{array}{cccccccc}
                1 & a & \dots & a^n & 0 & 0 & \dots & 0 \\ 
                f\left(a_0\right) & f\left(a_0\right)a_0 & \dots & f\left(a_0\right)a_0^m & 1 & a_0 & \dots & a_0^n \\ 
                \vdots & \vdots & \ddots & \vdots & \vdots & \vdots & \ddots & \vdots \\ 
                f\left(a_l\right) & f\left(a_l\right)a_l & \dots & f\left(a_l\right)a_l^m & 1 & a_l & \dots & a_l^n
            \end{array}\right|
        \end{aligned}
        $$
        satisfy that $\beta \neq 0$ and $f(a)=(-1)^{n m} \alpha / \beta$.
    \end{enumerate}
\end{proposition}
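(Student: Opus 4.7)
My plan splits according to Parts I and II.

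For \textbf{Part I}, write $f = P/Q$ in lowest terms; then $\deg f = \max(\deg P, \deg Q)$ and $\operatorname{ord}_\infty f = \deg Q - \deg P$. A case split on the sign of $\operatorname{ord}_\infty f$ shows that in both cases $n = \deg Q \geq 0$ and $m = \deg P \geq 0$, and hence $l = n + m = \deg P + \deg Q$, which is exactly the dimension count needed to rationally interpolate a function of this type at $l+1$ points.

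For \textbf{Part II}, the strategy is to reduce both $\alpha$ and $\beta$ to the template of Lemma \ref{lemma4_4} and then take the quotient. Because each $a_i \in \operatorname{dom} f$ one has $Q(a_i) \neq 0$, so multiplying row $i+1$ of each matrix by $Q(a_i)$ converts every rational entry $f(a_i) a_i^j = P(a_i) a_i^j / Q(a_i)$ into the polynomial entry $P(a_i) a_i^j$, scaling the determinant by the common nonzero factor $\prod_{i=0}^{l} Q(a_i)$. The rescaled $\alpha$-matrix is then of the type treated by Lemma \ref{lemma4_4} after identifying the lemma's $(P, Q)$ with $(Q, P)$; the rescaled $\beta$-matrix is obtained from the rescaled $\alpha$-matrix by a swap of its two column blocks of sizes $n+1$ and $m+1$, which contributes a known transposition sign and brings it into Lemma \ref{lemma4_4}'s form with $(P, Q)$ now in their natural role. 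Applying the lemma yields
\[
\alpha \prod_i Q(a_i) \;=\; P(a)\, \operatorname{res}(Q, P) \prod_{0 \leq i < j \leq l}(a_j - a_i), \qquad \beta \prod_i Q(a_i) \;=\; \pm\, Q(a)\, \operatorname{res}(P, Q) \prod_{0 \leq i < j \leq l}(a_j - a_i).
\]
Since $\gcd(P, Q) = 1$, the $a_i$ are pairwise distinct, and each $Q(a_i) \neq 0$, all of these factors are nonzero; in particular $\beta \neq 0$. Dividing and invoking the resultant symmetry $\operatorname{res}(Q, P) = (-1)^{nm} \operatorname{res}(P, Q)$, the common factors $\prod_i Q(a_i)$, $\prod_{i<j}(a_j - a_i)$, and $\operatorname{res}(P, Q)$ all cancel, and the combination of the block-swap sign, the cofactor-expansion sign, and the resultant-symmetry sign collapses to precisely $(-1)^{nm}$, giving $f(a) = (-1)^{nm}\alpha/\beta$.

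The hard part will be the bookkeeping in the $\beta$ reduction: after the column-block swap the top row of the resulting matrix is not automatically aligned with Lemma \ref{lemma4_4}'s template, and one may need either a further column permutation or, if that fails, a direct re-run of the Sylvester$\,\times\,$Vandermonde factorization that underlies the proof of Lemma \ref{lemma4_4}. Tracking all the intervening signs and verifying that they collapse to exactly the claimed $(-1)^{nm}$ is the most delicate step of the argument.
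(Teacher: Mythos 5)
Your strategy coincides with the paper's: write $f=P/Q$ in lowest terms, clear denominators by multiplying the $i$-th data row by $Q(a_i)\neq 0$, reduce both determinants to Lemma \ref{lemma4_4}, and pull the sign out of the antisymmetry $\operatorname{res}(Q,P)=(-1)^{nm}\operatorname{res}(P,Q)$. Two concrete points would derail the execution as you have set it up. First, your Part I lands on $n=\deg Q$ and $m=\deg P$, which is the opposite of what the paper asserts ($n=\deg P$, $m=\deg Q$) and, more importantly, the opposite of what the matrices require: in $\alpha$ the pure power block has width $n+1$ and, after rescaling, must match the block of shifts of $Q$ in Lemma \ref{lemma4_4} applied with the roles of $P$ and $Q$ exchanged, which forces $n+1=\deg P+1$. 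Your assignment comes from reading $\operatorname{ord}_\infty f$ as the valuation $\deg Q-\deg P$; the paper is tacitly using the opposite sign convention, and with your assignment the rescaled matrices have block widths $\deg Q+1$ and $\deg P+1$ in the wrong order, so the reduction to the lemma fails whenever $\deg P\neq\deg Q$.

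Second, the column-block swap is not the right move for $\beta$, and it is not merely a sign problem: swapping the two column blocks of the rescaled $\alpha$-matrix carries its top row $(1,a,\dots,a^n,0,\dots,0)$ to $(0,\dots,0,1,a,\dots,a^n)$, whereas the rescaled $\beta$-matrix has top row $(1,a,\dots,a^m,0,\dots,0)$ sitting over the $P$-block (the exponent $n$ in the printed top row of $\beta$ is a typo; compare $\psi$ in Theorem \ref{theorem4_6}). So $\beta$ is genuinely a different determinant from block-swapped $\alpha$ --- one interpolates the numerator at $a$, the other the denominator --- and no permutation of columns converts one into the other. The correct step, which is what the paper does, is to apply Lemma \ref{lemma4_4} to $\beta$ directly with $P$ and $Q$ in their natural roles: the rescaled $\beta$-matrix is exactly the lemma's matrix, yielding $\beta\prod_i Q(a_i)=Q(a)\operatorname{res}(P,Q)\prod_{i<j}(a_j-a_i)$ with no transposition sign at all, while $\alpha\prod_i Q(a_i)=P(a)\operatorname{res}(Q,P)\prod_{i<j}(a_j-a_i)$; the entire factor $(-1)^{nm}$ then comes from the single ratio $\operatorname{res}(Q,P)/\operatorname{res}(P,Q)$. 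Had you pursued the block swap you would pick up $(-1)^{(n+1)(m+1)}$, which combined with the resultant sign gives $(-1)^{n+m+1}\neq(-1)^{nm}$ in general, confirming that your anticipated collapse of signs would not occur and you would be forced into your fallback of rerunning the Vandermonde-times-Sylvester factorization.
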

\begin{proof}
    By the assumption on $f$, there exists coprime polynomial function $P, Q\colon k \rightarrow k$ such that $Q(a) \neq 0$ and $f=\dfrac{P(a)}{Q(a)}$ for all $a \in \operatorname{dom}f$. By definition we have $n=\operatorname{deg} P \geqslant 0$ and $m=\operatorname{deg} Q \geqslant 0$. 
    
    By Lemma \ref{lemma4_4}, we have $\alpha \cdot \prod_{i=0}^{l} Q\left(a_i\right)=P(a) \cdot \operatorname{res}(Q, P) \prod_{i=0}^{l-1} \prod_{j=i+1}^l\left(a_j-a_i\right)$ and $\beta \cdot \prod_{i=0}^{l} Q\left(a_i\right)=Q(a) \cdot \operatorname{res}(Q, P) \prod_{i=0}^{l-1} \prod_{j=i+1}^l\left(a_j-a_i\right)$. Since $a_0, \dots, a_{l}$ are distinct and $P$ is coprime to $Q$, we obtain that $\prod_{i=0}^{l-1} \prod_{j=i+1}^l\left(a_j-a_i\right) \neq 0$ and $\operatorname{res}(P, Q) \neq 0$. Since $a, a_0, \dots, a_l \in \operatorname{dom} f$, we have that $Q(a) \neq 0$, and $Q\left(a_i\right) \neq 0$ for all $i=0, \dots, l$. Therefore $\beta \neq 0$ and $\dfrac{\alpha}{\beta}=\dfrac{\operatorname{res}(Q, P)}{\operatorname{res}(P, Q)} \dfrac{P(a)}{Q(a)}=(-1)^{n m} f(a)$.
\end{proof}
\begin{theorem}\label{theorem4_6}
    Let $X$ be a normal variety over an algebraically closed field $k$ of cardinality exceeding continuum and let $f\colon X \times k^1 \dashrightarrow k$ be a partial function defined on a nonempty Zariski open subset of $X \times k^1$, if $f_a$ and $f^b$ are rational functions for all (a,b) $\in \operatorname{dom}f$, then $f$ is a rational function.
\end{theorem}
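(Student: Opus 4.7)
The plan is to use Proposition \ref{proposition4_3} to extract a Zariski dense subset $\Lambda \subseteq X$ of cardinality at most continuum along which $f_a$ has uniformly bounded degree and order at infinity, to assemble from the family $\{f_a\}_{a \in \Lambda}$ an explicit rational function $g$ on $X \times k^1$ via the Cramer-type formula of Proposition \ref{proposition4_5}, and finally to identify $f$ with $g$ via Proposition \ref{proposition3_3}.

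I would first stratify $X$ by invariants of $f_a$: for each $a \in X$ such that $f_a$ is a nonzero rational function let $n_a, m_a$ be the invariants of Proposition \ref{proposition4_5}, and set $\Lambda_{n,m} := \{a : n_a = n,\ m_a = m\}$. Applying Proposition \ref{proposition4_3} to the cover $X = \{a : \operatorname{dom} f_a = \varnothing\} \cup \{a : f_a \equiv 0\} \cup \bigcup_{(n,m) \in \mathbb{N}^2} \Lambda_{n,m}$, some piece contains a Zariski dense subset of cardinality at most continuum. The first piece is a proper closed subset (since $\operatorname{dom} f$ is nonempty open in the irreducible $X \times k^1$ and projections are open), so it is not dense; if the second piece is dense then $f^b$ vanishes identically on its domain for every $b$ with $f^b$ rational, making $f \equiv 0$ trivially rational; hence without loss of generality some $\Lambda_{n_0, m_0}$ contains a Zariski dense subset $\Lambda$ of cardinality at most continuum. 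Set $l := n_0 + m_0$.

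Since each $f_a$ with $a \in \Lambda$ has only finitely many poles, the union $\bigcup_{a \in \Lambda}(k \setminus \operatorname{dom} f_a)$ has cardinality at most continuum; as $|k|$ strictly exceeds continuum, I may choose $l+1$ pairwise distinct elements $b_0, \dots, b_l \in k$ lying in $\operatorname{dom} f_a$ for every $a \in \Lambda$ and every $i$. Each $f^{b_i}\colon X \dashrightarrow k$ is then rational by hypothesis, and substituting the fixed $b_i$'s together with the rational functions $a \mapsto f^{b_i}(a)$ into the determinants $\alpha, \beta$ of Proposition \ref{proposition4_5} (with $b$ playing the role of the free scalar there) produces two polynomials in $b$ whose coefficients are rational functions on $X$. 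Consequently the formula
$$g(a, b) := (-1)^{n_0 m_0}\,\alpha(a, b)/\beta(a, b)$$
defines a rational function on $X \times k^1$ provided $\beta \not\equiv 0$; but Proposition \ref{proposition4_5} gives $\beta(a, b) \neq 0$ for every fixed $a \in \Lambda$ and every $b \in \operatorname{dom} f_a$, which suffices.

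For each $a \in \Lambda$, Proposition \ref{proposition4_5} moreover yields $f_a(b) = g_a(b)$ on the cofinite subset $\operatorname{dom} f_a$ of $k^1$, so the rational functions $f_a, g_a \in k(b)$ coincide identically. Therefore $f(a, b) = g(a, b)$ on $(\Lambda \times k^1) \cap \operatorname{dom} f \cap \operatorname{dom} g$, and Proposition \ref{proposition3_3} with $\Lambda_1 := \Lambda$ and $\Lambda_2 := k^1$ concludes that $f \equiv g$ is a rational function. The main obstacle is the twofold appeal to cardinality: Proposition \ref{proposition4_3} requires $|k|$ to be at least continuum in order to extract a Zariski dense stratum of at-most-continuum cardinality, and the subsequent pigeonhole step specifically requires $|k|$ to be strictly larger than continuum so as to pick the interpolation nodes $b_0, \dots, b_l$ outside the continuum-many pole loci of the family $\{f_a\}_{a \in \Lambda}$.
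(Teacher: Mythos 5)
Your proposal is correct and follows essentially the same route as the paper's proof: stratify $X$ by the degree-type invariants of $f_a$, extract a Zariski dense stratum $\Lambda$ of at most continuum cardinality via Proposition \ref{proposition4_3}, use the strict cardinality hypothesis to choose interpolation nodes $b_0,\dots,b_l$ in $\bigcap_{a\in\Lambda}\operatorname{dom}f_a$, assemble $g$ from the determinants of Proposition \ref{proposition4_5}, and conclude with Proposition \ref{proposition3_3}. The only differences are cosmetic (the paper normalises the degenerate cases away at the outset via a translation and Corollary \ref{corollary3_2}, and takes $\Lambda_2$ to be the whole intersection $S=\bigcap_{a\in\Lambda}\operatorname{dom}f_a$ rather than $k^1$).
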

\begin{proof}
    If $\left.f\right|_{\operatorname{dom}f}\equiv 0$ then the statement holds trivially. For the subsequent proof we shall assume that $f$ is not constant zero on its domain of definition. Therefore up to a translation we can assume w.l.o.g. that $f^0\colon X \dashrightarrow k$ is a nonzero rational function. By virtue of Corollary \ref{corollary3_2}, we can assume w.l.o.g that $X=\left\{a \in \operatorname{dom} f^0 \mid f(a, 0) \neq 0\right\}$. In particular, we have that $f_a\colon k \dashrightarrow k$ is a nonzero rational function for each $a \in X$.

    For any $d \in \mathbb{N}$ and $e \in \mathbb{Z}$ define $\Lambda_d^e:=\left\{a \in X \mid \operatorname{deg}f_a=d\right.$, $\left.\operatorname{ord}_{\infty}\left(f_a\right)=e\right\}$, then $X=\coprod_{e \in \mathbb{Z}} \coprod_{d=0}^{\infty} \Lambda_d^e$. By Proposition \ref{proposition4_3} there exists $(d, e) \in \mathbb{N} \times \mathbb{Z}$ and a Zariski dense subset $\Lambda$ of $X$, such that $\operatorname{deg} f_a=d$ and $\operatorname{ord}_{\infty}\left(f_a\right)=e$ for all $a \in \Lambda$, and the cardinality of $\Lambda$ is continuum. Denote $n:=d+\min \{0, l\}, m:=d-\max \{0, l\}$ and $l:=n+m$, then by Proposition \ref{proposition4_5} we have $n, m, l \in \mathbb{N}$.

    Since $\left\{\operatorname{dom} f_a\right\}_{a \in \Lambda}$ is a continuum family of cofinite subsets of $k^1$ and the cardinality of $k$ exceeds continuum, we conelude that $S:=\bigcap_{a \in \Lambda}\operatorname{dom} f_a$ is infinite, and hence Zariski dense in $k^1$. In particular, there exists distinct elements $b_0, \dots, b_{l} \in S$. Again by virtue of Corollary \ref{corollary3_2}, we can assume w.l.o.g. that $X=\bigcap_{i=0}^{l} \operatorname{dom} f^{b_i}$.

    Define the regular functions $\phi\colon X \times k^1 \rightarrow k$ and $\psi\colon X \times k^1 \rightarrow k$ by
    $$
    \begin{aligned}
        \phi(a, b):=&\left|\begin{array}{cccccccc}
            1 & b & \dots & b^n & 0 & 0 & \dots & 0\\ 
            1& b_0 & \dots & b_0^n & f\left(a, b_0\right) & f\left(a, b_0\right) b_0 & \dots & f\left(a, b_0\right) b_0^m \\ 
            \vdots & \vdots & \ddots & \vdots & \vdots & \vdots & \ddots & \vdots \\ 
            1& b_l & \dots & b_l^n & f\left(a, b_l\right) & f\left(a, b_l\right) b_l & \dots & f\left(a, b_l\right) b_l^m
        \end{array}\right|\\ 
        \psi(a, b):=&\left|\begin{array}{cccccccc}
            1 & b & \dots & b^m & 0 & 0 & \dots & 0\\ 
            f\left(a, b_0\right) & f\left(a, b_0\right)b_0 & \dots & f\left(a, b_0\right)b_0^m & 1 & b_0 & \dots & b_0^n \\ 
            \vdots & \vdots & \ddots & \vdots & \vdots & \vdots & \ddots & \vdots \\ 
            f\left(a, b_l\right) & f\left(a, b_l\right)b_l & \dots & f\left(a, b_l\right)b_l^m & 1 & b_l & \dots & b_l^n
        \end{array}\right|
    \end{aligned}
    $$

    Take any $a \in \Lambda$, then by Proposition \ref{proposition4_5} $\psi_a(b) \neq 0$ and $f_a(b)=(-1)^{n m} \phi_a(b) / \psi_a(b)$ for all $b \in S \subseteq \operatorname{dom} f_a$.

    Define rational function $g\colon X \times k \dashrightarrow k$ by $g(a, b):=\dfrac{\phi(a, b)}{\psi(a, b)}$ on $\operatorname{dom}g:=\left\{(a, b) \in \operatorname{dom}f \mid \psi_a(b) \neq 0\right\}$, then $\Lambda \times S \subseteq \operatorname{dom}g$ and by construction $f(a, b)=g(a, b)$ for all $(a, b) \in \Lambda \times S$. Recall that $\Lambda, S$ are Zariski dense subsets of $X, k^1$ respectively, the desired result then follows from Proposition \ref{proposition3_3}.
\end{proof}
\begin{corollary}\label{corollary4_7}
    Let $k$ be an algebraically closed field of cardinality exceeding continuum, and let $f\colon k^n \dashrightarrow k$ be a partial function defined on a nonempty Zariski open subset of $k^n$, if $f_{a_1, \dots, a_{i-1}}^{a_{i+1}, \dots, a_n}\colon k \dashrightarrow k$ is a rational function for every $\left(a_1, \dots, a_n\right)\in \operatorname{dom}f$ and $i \in\{1, \dots, n\}$, then $f$ is a rational function.
\end{corollary}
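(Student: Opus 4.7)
The plan is to induct on $n$, reducing the multivariate statement to the two-factor theorem established in Theorem \ref{theorem4_6}. The base case $n=1$ is immediate: the hypothesis that the unique one-variable slice of $f$ is rational is exactly the desired conclusion. For the inductive step with $n \geq 2$, I would identify $k^n$ with $k^{n-1} \times k^1$ and aim to invoke Theorem \ref{theorem4_6} with $X := k^{n-1}$, which is smooth and therefore normal.

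Two hypotheses of Theorem \ref{theorem4_6} must then be verified for every $(a, b) \in \operatorname{dom}f$, where now $a \in k^{n-1}$ and $b \in k$. The first, that $f_a \colon k \dashrightarrow k$ is rational, is precisely the $i = n$ case of the hypothesis currently in force, so nothing new is required. The second, that $f^b \colon k^{n-1} \dashrightarrow k$ is rational, is where the induction hypothesis enters: I would argue that for each such $b$ the sliced function $f^b$ itself satisfies the hypothesis of the corollary in $n-1$ variables, and therefore inherits rationality by induction. Concretely, $\operatorname{dom}f^b$ is a nonempty Zariski open subset of $k^{n-1}$ (it contains $a$), and for any $(a_1, \dots, a_{n-1}) \in \operatorname{dom}f^b$ and $i \in \{1, \dots, n-1\}$ one reads off from Definition \ref{definition1_1} that the one-variable slice $(f^b)_{a_1, \dots, a_{i-1}}^{a_{i+1}, \dots, a_{n-1}}$ coincides with $f_{a_1, \dots, a_{i-1}}^{a_{i+1}, \dots, a_{n-1}, b}$, which is rational by the hypothesis of the corollary applied at the point $(a_1, \dots, a_{n-1}, b) \in \operatorname{dom}f$.

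With both hypotheses of Theorem \ref{theorem4_6} in hand, the theorem applied to $f$ on $k^{n-1} \times k^1$ immediately yields that $f$ is a rational function, closing the induction. I do not anticipate any substantive obstacle here, since the heavy lifting—the Baire-category argument of Proposition \ref{proposition3_7}, the Hankel determinant test of Lemma \ref{lemma3_6}, the hyperplane pigeonhole of Proposition \ref{proposition4_2}, and the resultant-based reconstruction of Lemma \ref{lemma4_4} and Proposition \ref{proposition4_5}—has already been carried out in the preceding sections. This corollary is merely the clean inductive packaging that reduces the case of $n$ variables to the \emph{one-variable plus normal-variety} setting covered by Theorem \ref{theorem4_6}.
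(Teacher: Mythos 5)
Your proposal is correct and matches the paper's proof, which is exactly the one-line instruction ``Recall Theorem \ref{theorem4_6} and apply mathematical induction on $n$''; you have simply spelled out the details (base case, identification of $k^n$ with $k^{n-1}\times k^1$, and the observation that the slices of $f^b$ are slices of $f$) that the paper leaves implicit. No discrepancy to report.
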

\begin{proof}
    Recall Theorem \ref{theorem4_6} and apply mathematical induction on $n \in \mathbb{N}$.
\end{proof}

\quad

University of Warwick
Coventry, CV4 7AL, UK

hanwen.liu@warwick.ac.uk

\end{document}